\documentclass[12pt]{amsart}
\usepackage{amsfonts}
\usepackage{amssymb, amscd}
\usepackage{amsmath}
\input xy
\xyoption {all}

\usepackage{amsmath}
\usepackage{amsthm}
\usepackage{amssymb}
\usepackage{latexsym}
\usepackage[dvips]{graphics}

\usepackage{geometry}
\geometry{left=2.6cm, right=2.6cm, bottom=1.15in, top=1.15in,
includefoot}

\newtheorem{thm}{Theorem}[section] 
\newtheorem{cor}[thm]{Corollary}
\newtheorem{defn}[thm]{Definition}
\newtheorem{example}[thm]{Example}

\newtheorem{lemma}[thm]{Lemma}
\newtheorem{prop}[thm]{Proposition}
\newtheorem{remark}[thm]{Remark}

\newtheorem{prob}[thm]{Problem}

\numberwithin{equation}{section}

\newcommand{\LL}{{\mathcal L}}

\newcommand{\OO}{{\mathcal O}}
\newcommand{\HC}{{\mathcal H}}
\newcommand{\VV}{{\mathcal V}}

\newcommand{\MC}{{\mathcal M}}
\newcommand{\DD}{{\mathcal D}}

\newcommand{\Z}{\mathbb{Z}}
\newcommand{\Lef}{\mathbb{L}}

\newcommand{\Q}{\mathbb{Q}}

\newcommand{\C}{\mathbb{C}}

\newcommand{\mh}{\mbox{MHM}}
\newcommand{\hp}{\mbox{MH}}

\newcommand{\mhs}{\mbox{mHs}}

\begin{document}

\title{Characteristic classes of complex hypersurfaces}

\author[Sylvain E. Cappell ]{Sylvain E. Cappell}
\address{S. E. Cappell: Courant Institute, New York University, 251 Mercer Street, New York, NY 10012, USA}
\email {cappell@cims.nyu.edu}

\author[L. Maxim ]{Laurentiu Maxim}
\address{L. Maxim : Department of Mathematics, University of Wisconsin, Madison, 480 Lincoln Dr, Madison WI 53706-1388, USA}
\email {maxim@math.wisc.edu}

\author[J. Sch\"urmann ]{J\"org Sch\"urmann}
\address{J.  Sch\"urmann : Mathematische Institut,
          Universit\"at M\"unster,
          Einsteinstr. 62, 48149 M\"unster,
          Germany.}
\email {jschuerm@math.uni-muenster.de}

\author[Julius L. Shaneson ]{Julius L. Shaneson}
\address{J. L. Shaneson: Department of Mathematics, University of Pennsylvania, 209 S 33rd St., Philadelphia, PA 19104, USA}
\email {shaneson@sas.upenn.edu}

\subjclass[2000]{Primary 32S20, 14B05, 14J17, 32S25, 32S35, 32S40, 32S50, 32S60; Secondary 14C17, 14C30, 14J70, 32S30, 32S55, 58K10. }

\keywords{characteristic classes, Hodge theory, hypersurfaces, singularities, knot theory, intersection homology, Milnor fiber, vanishing cycles.}

\begin{abstract} The Milnor-Hirzebruch class of a locally complete intersection $X$ in an algebraic manifold $M$ measures the difference between the (Poincar\'e dual of the) Hirzebruch class of the virtual tangent bundle of $X$ and, respectively, the Brasselet-Sch\"urmann-Yokura (homology) Hirzebruch class of $X$. In this note, we calculate the Milnor-Hirzebruch class of a globally defined algebraic hypersurface $X$ in terms of the corresponding Hirzebruch invariants of singular strata in a Whitney stratification of $X$. Our approach is based on Sch\"urmann's specialization property for the motivic Hirzebruch class transformation of Brasselet-Sch\"urmann-Yokura. The present results also yield calculations of Todd,  Chern and $L$-type characteristic classes of hypersurfaces.
\end{abstract}

\maketitle


\section{Introduction}

An old problem in geometry and topology is the computation of topological and analytical invariants of complex  hypersurfaces, such as  Betti numbers,  Euler characteristic, signature, Hodge numbers and Hodge polynomial, etc.; e.g., see \cite{D,H,KW,Li}. While the non-singular case is easier to deal with, the singular setting requires a subtle analysis of the relation between the local and global topological and/or analytical structure of singularities. For example,  the Euler characteristic of a smooth projective hypersurface  depends only on its degree and dimension. More generally, Hirzebruch \cite{H} showed that the Hodge polynomial of smooth hypersurfaces has a simple expression in terms of the degree and the cohomology class of a hyperplane section. However, in the singular context the invariants of a hypersurface inherit additional contributions from the singular locus.  For instance, the Euler characteristic of a projective hypersurface  with only isolated singularities differs (up to a sign) from that of a smooth hypersurface by the sum of Milnor numbers associated to the singular points.  
In \cite{CLMS}, the authors studied the Hodge theory of one-parameter degenerations of smooth  compact hypersurfaces, where the aim was to compare the Hodge polynomials of the general (smooth) fiber and respectively  special (singular) fiber of such a family of hypersurfaces. By using Hodge-theoretic aspects of the nearby and vanishing cycles \cite{De2,Sa1} associated to the family, the authors obtained in \cite{CLMS} a formula expressing the difference of the two  polynomials in terms of invariants of singularities of the special fiber (see also \cite{Di} for the corresponding treatment of Euler characteristics). 

In this note we study the (homology) Hirzebruch  classes \cite{BSY} of singular hypersurfaces, and derive characteristic class versions of the above-mentioned results from \cite{CLMS}. As these parametrized families of classes include at special values versions (known in many special cases to be the standard ones) of Todd-classes, Chern-classes and $L$-classes, the results described in this paper yield new formulae for all of these. We obtain results both for intersection homology based versions of such classes, as well as for standard homology based versions of them. These, of course, are equal for smooth varieties, but in general differ. 
Formulae for such characteristic classes in the settings of stratified submersions were obtained by some of the present authors in \cite{CMS1,CMS2}. Here by combining results and methods of those papers with a recent result of the fourth author \cite{Sc}, we in particular obtain results which are the counterpart for divisors and, more generally, for regular embeddings to the above-mentioned submersion results. By using the good fit between the results of \cite{CMS2} with that of \cite{Sc}, and where details paralleled those of our earlier papers just giving indications, we are able to give succinct proofs.
The present results on embedding have independent interest, e.g., because of their relation to knot-theoretic invariants and their generalizations in the singular setting, see  \cite{CS0,ShCa,Li94,Max05}.

\bigskip

The study in this note can be done in the following general framework: Let $X \overset{i}{\hookrightarrow} M$ be the inclusion of an algebraic hypersurface $X$  in a  complex algebraic manifold $M$. If $N_XM$ denotes the normal bundle of $X$ in $M$, then the {\it virtual tangent bundle} of $X$, that is, 
\begin{equation}T_{{\rm vir}}X:=[i^*TM-N_XM] \in K^0(X),\end{equation} is independent of the embedding in $M$ (e.g., see \cite{Fu}[Ex.4.2.6]), so it is a well-defined element in the Grothendieck group of vector bundles on $X$. Of course 
$$T_{{\rm vir}}X=[TX] \in K^0(X),$$
in case $X$ is a smooth algebraic submanifold.
If $cl^*$ denotes a cohomology characteristic class theory, then one can associate to the pair $(M,X)$ an {\it intrinsic}  homology class (i.e., independent of the embedding $X\hookrightarrow M$) defined as:
\begin{equation}cl_*^{\rm vir}(X):=cl^*(T_{{\rm vir}}X) \cap [X] \in H_*(X)\:.\end{equation} 
Here $[X]\in H_*(X)$ is the {\it fundamental class} of $X$ in a suitable homology theory  $H_*(X)$ (such as  Borel-Moore homology $H_{2*}^{BM}(X)$ or Chow groups $CH_*(X)$ with integer or rational coefficients).\\

Assume, moreover, that there is a homology characteristic class theory $cl_*(-)$ for complex algebraic varieties, functorial for proper morphisms, obeying the rule that for $X$ smooth  $cl_*(X)$ is the Poincar\'e dual of $cl^*(TX)$.  If $X$ is smooth,  then clearly we have that  $$cl_*^{\rm vir}(X)=cl^*(TX) \cap [X]=cl_*(X) \:.$$ However, if $X$ is singular, the difference between  the homology classes $cl_*^{\rm vir}(X)$ and $cl_*(X)$ depends in general on the singularities of $X$. This motivates the following
\begin{prob} Describe the difference $cl_*^{\rm vir}(X)-cl_*(X)$ in terms of the geometry of the singular locus of $X$.\end{prob}
The strata of the singular locus have a rich geometry, beginning with generalizations of knots which describe their local link pairs. This ``normal data", encoded in algebraic geometric terms via, e.g., the mixed Hodge structures on the (cohomology of the) corresponding Milnor fibers,  will play a fundamental role in our study of characteristic classes of hypersurfaces.

The above problem is usually studied in order to understand the complicated homology classes $cl_*(X)$ in terms of the simpler virtual classes $cl_*^{\rm vir}(X)$ and these difference terms measuring the complexity of singularities of $X$.

There are a few instances in the literature where, for the appropriate choice of $cl^*$ and $cl_*$, this problem has been solved.
For example, if $cl^*=L^*$ is the Hirzebruch $L$-polynomial in the Pontrjagin classes \cite{H}, the difference between the intrinsic homology class $L_*^{\rm vir}(X):=L^*(T_{{\rm vir}}X) \cap [X]$ and the Goresky-MacPherson $L$-class $L_*(X)$ (\cite{GM1}) for $X$ a {\it compact} complex hypersurface was explicitly calculated  in \cite{CS0,ShCa}  as follows: fix a Whitney stratification of $X$ and let $\VV_0$ be the set of  strata $V$ with  ${\rm dim} V<{\rm dim} X$; then if  all $V \in \VV_0$ are assumed simply-connected,
\begin{equation}\label{CS}
L_*^{\rm vir}(X)-L_*(X)=\sum_{V \in \VV_0} \sigma({\rm lk}(V)) \cdot L_*(\bar{V}),
\end{equation} 
where $\sigma({\rm lk}(V)) \in \Z$ is a certain signature invariant associated to the link pair of the stratum $V$ in $(M,X)$. (This result is in fact of topological nature, and holds more generally for a suitable compact stratified pseudomanifold $X$, which is PL-embedded in real codimension two in a manifold $M$; see \cite{CS0,ShCa} for details.) Here the Goresky-MacPherson $L$-class $$L_*(X)= L_*([IC'_X])$$
is the $L$-class of the shifted (self-dual) intersection cohomology complex 
$$IC'_X:=IC_X[-{\rm dim}(X)]$$ of $X$.
(For a functorial $L$-class transformation in the complex algebraic
context compare with \cite{BSY}.)\\

If $cl^*=c^*$ is the total Chern class in cohomology, the problem amounts to comparing the Fulton-Johnson class $c^{FJ}_*(X):=c_*^{vir}(X)$ (e.g., see \cite{Fu,FJ}) with the  homology Chern class $c_*(X)$ of MacPherson \cite{MP}. Here $c_*(X):=c_*(1_X)$, with
$$c_*: F(X)\to H_*(X)$$
the functorial Chern class transformation of MacPherson \cite{MP}, defined on the group
$F(X)$ of complex algebraically constructible functions.
The difference between these two classes is measured by the so-called {\it Milnor class}, $\MC_*(X)$, which is studied in \cite{Alu0,BLSS,BLSS2,BSS,Max,PP,Sch,Sch2,Y}. This is a homology class supported on the singular locus of $X$, and it was computed in \cite{PP} (see also \cite{Sch,Sch2,Y,Max}) as a weighted sum in the Chern-MacPherson classes of closures of singular strata of $X$, the weights depending only on the normal information to the strata. For example, if $X$ has only isolated singularities, the Milnor class equals (up to a sign) the sum of the Milnor numbers attached to the singular points, which also explains the terminology.\\

Lastly, if $cl^*=td^*$ is the Todd class, then the  Verdier-Riemann-Roch theorem
for the Todd-class transformation
$$td_*: G_0(X)\to H_*(X)$$
of Baum-Fulton-MacPherson \cite{BFM, Fu}
 can be used to show that 
 $$td_*^{\rm vir}(X):=td^*(T_{{\rm vir}}X) \cap [X]= td_*(X)$$ 
 equals in fact the Baum-Fulton-MacPherson Todd class $td_*(X):=td_*([\OO_X])$ of $X$; see \cite{BFM}, (IV.1.4). Here $[\OO_X]$ is the class of the structure sheaf in the Grothendieck
 group $G_0(X)$ of coherent algebraic $\OO_X$-sheaves.

\bigskip

A main goal of this note is to study the (unifying) case when $cl^*=T_y^*$ is the (total) cohomology Hirzebruch class of the generalized Hirzebruch-Riemann-Roch theorem \cite{H}.
The aim is to show that the results stated above are part of a more general philosophy, derived from comparing the intrinsic homology class (with polynomial coefficients) \begin{equation}{T_y}^{\rm vir}_*(X):=T_y^*(T_{{\rm vir}}X) \cap [X] \in H_*(X)\otimes \Q[y]\end{equation}  
with the motivic Hirzebruch class ${T_y}_*(X)$ of \cite{BSY}.  This approach is motivated by  the fact  that  the $L$-class $L^*$, the Todd class $td^*$ and resp. the Chern class $c^*$ are all  suitable specializations (for $y=1,0,-1$, respectively) of the Hirzebruch class $T_y^*$; see \cite{H}.  Here ${T_y}_*(X):={T_y}_*([id_X])$, with
$${T_{y}}_*: K_0(var/X) \to H_*(X)\otimes \Q[y]$$
the functorial Hirzebruch class transformation of Brasselet-Sch\"urmann-Yokura \cite{BSY} defined on the relative Grothendieck group $K_0(var/X)$ of complex algebraic varieties over $X$.

In fact, we also use the description ${T_{y}}_*={MHT_{y}}_*\circ \chi_{Hdg}$ in terms of algebraic
mixed Hodge modules, with  
\begin{equation}\label{t}{MHT_y}_*: K_0({\rm MHM}(X)) \to H_*(X) \otimes \Q[y,y^{-1}]\end{equation} 
the corresponding functorial Hirzebruch class transformation of Brasselet-Sch\"urmann-Yokura \cite{BSY, CMS2, Sch3}
defined on the Grothendieck group $K_0({\rm MHM}(X))$ of algebraic mixed Hodge modules on $X$.
Here $\chi_{Hdg}: K_0(var/X) \to K_0({\rm MHM}(X))$ is the natural group homomorphism given by
(e.g., see \cite{Sch3}[Cor.4.10]):
$$[f: Z\to X] \mapsto [f_!\Q^H_Z] \:.$$
Then the homology Hirzebruch class ${T_y}_*(X)={MHT_{y}}_*([\Q^H_X])$ is the value taken on the (class of the) constant Hodge sheaf $\Q^H_X$ by the natural transformation ${MHT_y}_*$, since
$\chi_{Hdg}([id_X])= [\Q^H_X]$.  For $X$ pure-dimensional, the use of mixed Hodge modules  also allows us to consider the Intersection  Hirzebruch class (as in \cite{CMS2, Sch3}):
$${IT_y}_*(X):={MHT_y}_*([IC'^H_X])\in H_*(X)\otimes \Q[y]$$
corresponding to the shifted intersection cohomology Hodge module $IC'^H_X:=IC^H_X[-{\rm dim}(X)]$. 
This is sometimes more natural, especially for the comparison with the $L$-class
$L_*(X)$ of $X$.

\bigskip
Assume in what follows that the complex algebraic variety $X$ is a
{\it hypersurface}, which is globally defined as the zero-set  $X=\{f=0\}$ (of codimension one) of an algebraic function $f:M \to \C$ on a complex algebraic manifold $M$. (But see the discussion in Remark \ref{gen} on generalizing this to local complete intersections, e.g., hypersurfaces without a global equation.)
The main result of this note is the following, where as before,  $H_*(X)$ denotes either the Borel-Moore homology in even degrees $H^{BM}_{2*}(X)$, or the Chow group $CH_*(X)$:
 
\begin{thm}\label{main} Let $\VV$ be a fixed complex algebraic Whitney stratification of $X$, and denote by $\VV_0$ the collection of all singular strata (i.e., strata $V \in \VV$ with ${\rm dim}(V)<{\rm dim} X$). For each $V \in \VV_0$, let $F_v$ be the Milnor fiber of a point $v \in V$.
Assume that all strata $V \in \VV_0$ are simply-connected. Then:
\begin{equation}\label{eq0}
{T_y}^{\rm vir}_*(X) - {T_y}_*(X)
=\sum_{V \in \VV_0}\left( {T_y}_*(\bar V) - {T_y}_*({\bar V} \setminus V) \right) \cdot \chi_y([\tilde{H}^*(F_v;\Q)])\:.
\end{equation} 
If, moreover, for each $V \in \VV_0$, we define inductively $$\widehat{IT}_y(\bar V):= {IT_y}_*(\bar V)- \sum_{W < V}\widehat{IT}_y(\bar W) \cdot \chi_y([IH^*(c^{\circ}L_{W,V})])\:,$$ 
where the summation is over all strata $W \subset {\bar V} \setminus V$ and
$c^{\circ}L_{W,V}$ denotes the open cone on the link of  $W$ in
$\bar{V}$, then:
\begin{equation}\label{eq1}
{T_y}^{\rm vir}_*(X) - {T_y}_*(X)=\sum_{V \in \VV_0} \widehat{IT}_y(\bar V) \cdot \chi_y([\tilde{H}^*(F_v;\Q)])\:.
\end{equation} 
\end{thm}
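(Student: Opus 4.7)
\emph{Overall approach.} The plan is to use Sch\"urmann's specialization theorem \cite{Sc} for the motivic Hirzebruch class transformation ${MHT_y}_*$ to rewrite ${T_y}^{\rm vir}_*(X)$ as ${MHT_y}_*$ of a nearby-cycle Hodge-module class, and then to decompose the corresponding vanishing-cycle class stratum by stratum. Because $X=\{f=0\}$ is a principal divisor in $M$, the specialization property gives (with appropriate shift conventions) the identity
$$
{T_y}^{\rm vir}_*(X) \;=\; {MHT_y}_*\bigl([\psi_f^H\Q^H_M]\bigr),
$$
where $\psi_f^H$ denotes the nearby-cycle functor on mixed Hodge modules. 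The standard distinguished triangle in ${\rm MHM}(X)$ relating $\Q^H_X=i^*\Q^H_M$, $\psi_f^H\Q^H_M$, and the vanishing-cycle complex $\phi_f^H\Q^H_M$ then yields, in $K_0({\rm MHM}(X))$, the equality $[\psi_f^H\Q^H_M]-[\Q^H_X]=[\phi_f^H\Q^H_M]$, so that
$$
{T_y}^{\rm vir}_*(X)-{T_y}_*(X) \;=\; {MHT_y}_*\bigl([\phi_f^H\Q^H_M]\bigr).
$$
This reduces the theorem to a stratified computation of the vanishing cycle.

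\emph{Stratified decomposition, giving (\ref{eq0}).} The complex $\phi_f^H\Q^H_M$ vanishes where $df\neq 0$, hence on the smooth locus of $X$, so its support is contained in $\bigcup_{V\in\VV_0}\bar V$, and it is constructible with respect to the fixed Whitney stratification. Restricting to a stratum $V\in\VV_0$, it is a smooth admissible variation of mixed Hodge structures whose stalk at $v\in V$ is the reduced Milnor-fiber cohomology $\tilde H^*(F_v;\Q)$. Since each $V\in\VV_0$ is simply-connected by hypothesis, this variation is forced to be constant, so in $K_0({\rm MHM}(V))\otimes\Q[y,y^{-1}]$ one has
$$
[\phi_f^H\Q^H_M|_V] \;=\; [\Q^H_V]\cdot\chi_y\bigl([\tilde H^*(F_v;\Q)]\bigr).
$$
Additivity in the Grothendieck group along the locally closed stratification yields $[\phi_f^H\Q^H_M]=\sum_{V\in\VV_0}(j_V)_![\Q^H_V]\cdot\chi_y([\tilde H^*(F_v;\Q)])$. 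Combining this with the identity $(j_V)_![\Q^H_V]=[\Q^H_{\bar V}]-[\Q^H_{\bar V\setminus V}]$ and the functoriality of ${MHT_y}_*$ under the proper closed inclusions $\bar V\hookrightarrow X$ produces exactly (\ref{eq0}).

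\emph{Passage to intersection classes, giving (\ref{eq1}).} To pass to the second formula I would replace each difference ${T_y}_*(\bar V)-{T_y}_*(\bar V\setminus V)$ by a combination of intersection Hirzebruch classes of closures of strata. The recursion defining $\widehat{IT}_y(\bar V)$ is engineered for exactly this purpose: on each substratum $W<V$ the stalk of the shifted IC-Hodge module $IC'^H_{\bar V}$ computes $IH^*(c^{\circ}L_{W,V})$, so comparing $[\Q^H_{\bar V}]$ with $[IC'^H_{\bar V}]$ in $K_0({\rm MHM}(X))$ produces boundary contributions weighted by $\chi_y([IH^*(c^{\circ}L_{W,V})])$. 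Iterating the recursion down the poset of strata and substituting the result into (\ref{eq0}) yields (\ref{eq1}), paralleling the inductive computations already performed in \cite{CMS2}.

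\emph{Main obstacle.} The crux is the specialization identity in Step 1: one must verify Sch\"urmann's formula for ${MHT_y}_*$ with the correct shift and normalization (in particular distinguishing unipotent from full nearby cycles on the Hodge-module level) and confirm that the specialization indeed lands on ${T_y}^{\rm vir}_*(X)$ rather than on some twist of it by classes of the normal line bundle. Once this identification is established, the remainder of the argument is essentially formal, using $K_0({\rm MHM})$-additivity, Whitney constructibility of the vanishing cycles, and the inductive IC-decomposition developed in \cite{CMS2}.
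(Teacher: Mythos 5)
Your proposal is correct and follows essentially the same route as the paper: Sch\"urmann's specialization theorem plus the nearby/vanishing-cycle triangle to identify ${T_y}^{\rm vir}_*(X)-{T_y}_*(X)$ with ${MHT_y}_*(\Phi'^H_f([\Q^H_M]))$, then a stratified Grothendieck-group decomposition using rigidity of the variation on simply-connected strata and the inductive $IC$-recursion of \cite{CMS2}. The only (immaterial) difference is the order: the paper first proves (\ref{eq1}) by applying the \cite{CMS2} decomposition directly to the vanishing-cycle class and then deduces (\ref{eq0}), whereas you obtain (\ref{eq0}) first by the direct stratum-by-stratum decomposition and then pass to (\ref{eq1}).
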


\begin{remark}\label{new}\rm The assumptions in the first part of the above theorem can be weakened, in the sense that instead of a Whitney stratification we only need a partition of the singular locus $X_{\rm sing}$ into disjoint locally closed complex algebraic submanifolds $V$, such that the restrictions $\Phi_f(\Q_M)\vert_V$ of the vanishing cycle complex to all pieces $V$ of this partition have constant cohomology sheaves (e.g., these are locally constant sheaves on each $V$, and the pieces $V$ are simply-connected).  In particular, the above theorem can be used for computing the Hirzebruch class of the Pfaffian hypersurface and, respectively,  of the Hilbert scheme $$(\C^3)^{[4]}:=\{df_4=0\}\subset M_4$$ considered in \cite{DS}[Sect.2.4 and Sect.3]. Indeed, the singular loci of the two hypersurfaces under discussion have ``adapted" partitions as above with only simply-connected strata (cf. \cite{DS}[Lem.2.4.1 and Cor.3.3.2]). Moreover, the mixed Hodge module corresponding to the vanishing cycles of the defining function, as well as its Hodge-Deligne polynomial are calculated in \cite{DS}[Thm.2.5.1, Thm.2.5.2 Cor.3.3.2 and Thm.3.4.1].
So Thm.\ref{main} above can be used for obtaining class versions of these results from \cite{DS}.
\end{remark}

By the functoriality of ${T_y}_*$ and ${MHT_y}_*$, all homology characteristic classes of closures of strata are regarded in the homology $H_*(X)\otimes \Q[y,y^{-1}]$ of the ambient variety $X$.
Moreover, the cohomology groups $\tilde{H}^k(F_v;\Q)$ and $IH^k(c^{\circ}L_{W,V})$ carry  canonical mixed Hodge structures coming from stalk formulae such as
\begin{equation}\label{stalk}\tilde{H}^k(F_v;\Q)\simeq H^k(\Phi_f(\Q_M)_v),\end{equation}
and the functorial calculus of algebraic mixed Hodge modules (see the formulae (\ref{cone}) and (\ref{eq60}) in Section \ref{sec.comp}). By taking the alternating sum of these cohomology groups  in the Grothendieck group $K_0(\mhs)$ of (rational) mixed Hodge structures, we get classes
$$[\tilde{H}^*(F_v;\Q)], [IH^*(c^{\circ}L_{W,V})] \in K_0(\mhs)\:,$$
to which one can then apply the ring homomorphism (with $F^{\centerdot}$ the Hodge filtration)
$$\chi_y:  K_0(\mhs)\to \Z[y,y^{-1}];\: \chi_y([H]):=\sum_p {\rm dim} Gr^p_F(H \otimes \C) \cdot (-y)^p \:.$$

The requirement in Theorem \ref{main} that all strata in $X$ are simply-connected is only used to assure that all monodromy considerations become trivial to deal with. 
Moreover, in some cases much interesting information is readily available without any monodromy assumptions. 
For example, if $X$ has only {\it isolated} singularities
(so that the monodromy assumptions become vacuous), both formulae of the theorem agree, and the two classes ${T_y}^{\rm vir}_*(X)$ and resp. ${T_y}_*(X)$ coincide except in degree zero, where their difference is measured (up to a sign) by the sum of Hodge polynomials associated to the middle cohomology of the corresponding Milnor fibers attached to the singular points. More precisely, we have in this case that:
\begin{equation}
{T_y}^{\rm vir}_*(X) - {T_y}_*(X)=\sum_{x \in X_{\rm sing}} (-1)^{n} \chi_y([\tilde{H}^n(F_x;\Q)])= \sum_{x\in X_{\rm sing}} \chi_y([\tilde{H}^*(F_x;\Q)]),
\end{equation}
where  $F_x$ is the Milnor fiber of the isolated hypersurface singularity germ $(X,x)$, and $n$ is the complex dimension of $X$. 
The Hodge $\chi_y$-polynomials of the Milnor fibers can in general be computed from the better known {\it Hodge spectrum} of the singularities, and are just Hodge-theoretic refinements of the Milnor numbers, since
$$\chi_{-1}([\tilde{H}^*(F_x;\Q)])= \chi([\tilde{H}^*(F_x;\Q)])$$
is the reduced Euler characteristic of the Milnor fiber $F_x$.
 For this reason, we regard the difference 
 \begin{equation}
 {{\MC}T_y}_*(X):={T_y}^{\rm vir}_*(X) - {T_y}_*(X) \in H_*(X) \otimes \Q[y]
 \end{equation} 
 as a Hodge-theoretic Milnor class, and call it \emph{the Milnor-Hirzebruch class} of the hypersurface $X$. In fact, it is always the case that by substituting $y=-1$ into ${{\MC}T_y}_*(X)$ we obtain the (rationalized) Milnor class of $X$ (this follows from the commutative diagram (\ref{special}) below). Therefore, Theorem \ref{main} specializes in this case to a computation of the rationalized Milnor class of $X$, and the resulting formula holds without any monodromy assumptions (compare \cite{Max}).

\bigskip
The key ingredient used in the proof of Theorem \ref{main} is the specialization property for the motivic Hirzebruch class transformation ${MHT_y}_*$ (see \cite{Sc}). This is a generalization of Verdier's result \cite{V} on the specialization of the MacPherson Chern class transformation, which was used in \cite{PP,Sch,Sch2} for computing the Milnor class of $X$, and shows that the Milnor-Hirzebruch class ${{\MC}T_y}_*(X)$ of $X=f^{-1}(0)$ is entirely determined by the vanishing cycles of $f:M \to \C$ (see Theorem \ref{M}):
\begin{equation}\label{DT}
\MC{T_y}_*(X):={T_y}^{\rm vir}_*(X) - {T_y}_*(X)= T_{y*}(\Phi^m_f([id_M]))=
{MHT_y}_*(\Phi'^H_f(\left[\Q^H_M\right]))\:.
\end{equation}
As we will see below, equation (\ref{DT}) is an enriched version of the (localized) Milnor class formula 
$$\MC_*(X)=c_*(\Phi_f(1_M)) \in H_*(X_{\rm sing}),$$
whose degree appeared recently in the computation of {\it Donaldson-Thomas invariants}, e.g., see \cite{Beh,BBS,DS,JS}. In particular, in [\cite{JS}, Sect.4] the authors express hope that the Donaldson-Thomas theory could be lifted from constructible functions to mixed Hodge modules. We believe our approach is tailored to serve such a purpose. Similarly, motivic nearby and vanishing cycles are used in \cite{BBS}.

Note that $\Phi^m_f([id_M])$ and
$\Phi'^H_f(\left[\Q^H_M\right])$ in equation (\ref{DT}) are supported on the singular locus $X_{\rm sing}$ of $X$. So by the functoriality of the transformations ${T_{y}}_*$ and ${MHT_y}_*$ (for the closed inclusion $X_{\rm sing}\hookrightarrow X$), we can regard
\begin{equation}
\MC{T_y}_*(X)= {T_y}_*(\Phi^m_f([id_M]))=
{MHT_y}_*(\Phi'^H_f(\left[\Q^H_M\right])) \in H_*(X_{\rm sing})\otimes\Q[y]
\end{equation}
as a {\it localized} Milnor-Hirzebruch class.\\

Let us explain more of the notation appearing in equation (\ref{DT}). First note that one can use the nearby- and vanishing cycle functors $\Psi_f$ and $\Phi_f$ either on the motivic level of localized (at the class of the affine line) relative Grothendieck groups $$\MC(var/-):=K_0(var/-)[\Lef^{-1}]$$ (see \cite{Bit,GLM}), or on the Hodge-theoretical level of algebraic mixed Hodge modules 
(\cite{Sa0, Sa1}), ``lifting'' the corresponding functors on the level of  algebraically constructible sheaves 
(\cite{Di, Sch}) and algebraically constructible functions (\cite{Sch,V}), so that the following diagram commutes:
\begin{equation}\label{all-nearby}\begin{CD}
K_0(var/M)\\
@VVV\\
\MC(var/M) @> \Psi_f^m, \Phi_f^m >>  \MC(var/X)\\
@V \chi_{Hdg} VV  @V \chi_{Hdg} VV  \\
K_0(\mh(M)) @> \Psi'^H_f, \Phi'^H_f >>  K_0(\mh(X))\\
@V rat VV @V rat VV \\
K_0(D^b_c(M)) @> \Psi_f, \Phi_f >>  K_0(D^b_c(X)) \\
@V \chi_{stalk} VV @V \chi_{stalk} VV \\
F(M) @> \Psi_f, \Phi_f >> F(X)  \:.
\end{CD}\end{equation}
We also use the notation $\Psi'^H_f:= \Psi^H_f[1]$ and $\Phi'^H_f:=\Phi^H_f[1]$ for the shifted
functors, with $\Psi^H_f, \Phi^H_f: MHM(M)\to MHM(X)$ and $\Psi_f[-1],\Phi_f[-1]: Perv(M)\to Perv(X)$ preserving mixed Hodge modules and perverse sheaves, respectively. 

\begin{remark}\rm
The {\it smoothness} of $M$ is {\it not} used for this commutativity.
Moreover:
\begin{enumerate}
\item The motivic nearby and vanishing cycles functors of \cite{Bit,GLM}
take values in a refined {\em equivariant} localized Grothendieck group
$\MC^{\hat{\mu}}(var/X)$ of equivariant algebraic varieties over $X$ with a ``good'' action
of the pro-finite group $\hat{\mu}=\lim \mu_n$ of {\em roots of unity}
(for the projective system $\mu_{d\cdot n}\to \mu_n: \xi\mapsto \xi^d$). By definition,
this factorizes over a ``good'' action of a finite quotient group $\hat{\mu}\to \mu_n$
of $n$-th roots of unity. 
\item In our applications above we don't need to take this action into account.
So we use the composed horizontal transformations in the following commutative diagram
(see \cite{GLM}[Prop.3.17]):
\begin{equation}\label{comp}\begin{CD}
M_0(var/M) @> \Psi_f^m, \Phi_f^m >> \MC^{\hat{\mu}}(var/X) @> forget >> \MC(var/X)\\
@V \chi_{Hdg} VV  @V \chi_{Hdg} VV  @VV \chi_{Hdg} V\\
K_0(MHM(M)) @> \Psi'^H_f, \Phi'^H_f >> K^{mon}_0(MHM(X)) @> forget >> K_0(MHM(X)) \:.\\
\end{CD}\end{equation}
Here $K^{mon}_0(MHM(X))$ is the Grothendieck group of algebraic mixed Hodge modules
with a finite order automorphism, which in our case is induced from the semi-simple part 
$T_s$ of the monodromy automorphism acting on $\Psi^H_f, \Phi^H_f$.
\item Also note that for the commutativity of diagram (\ref{comp}) one has to use $\Psi'^H_f$ (as opposed to 
$\Psi^H_f$, as stated in \cite{GLM}[Prop.3.17]; this fits in fact with the reference given in
the proof of loc.cit.). Moreover, the Grothendieck group  $\MC^{\hat{\mu}}(var/X)$ used in \cite{GLM}
is finer than the one used in \cite{Bit}. But both definitions of the motivic nearby and vanishing
cycle functors are compatible (\cite{GLM}[Rem.3.13]), and $\chi_{Hdg}$ also factorizes over 
$\MC^{\hat{\mu}}(var/X)$ in the sense of \cite{Bit} by the same argument as for \cite{GLM}[(3.16.2)].
\item In a future work we will define a ``spectral Hirzebruch class transformation''
$${MHT_t}_*: K^{mon}_0(MHM(X)) \to \bigcup_{n\geq 1} H_*(X)\otimes \Q[t^{\frac{1}{n}}, t^{-\frac{1}{n}}]\:,$$
which is a class version of the {\it Hodge spectrum} (e.g., see \cite{GLM})
$$hsp: K^{mon}_0(\mhs) \to \bigcup_{n\geq 1}  \Z[t^{\frac{1}{n}}, t^{-\frac{1}{n}}]\:.$$
These spectral invariants are refined versions (for $t=-y$) of the Hirzebruch class transformation ${MHT_{y}}_*$ and the $\chi_y$-genus, respectively. We will get in particular a refined {\it spectral Milnor-Hirzebruch class}, needed for a
suitable Thom-Sebastiani result. 
\end{enumerate}
\end{remark}

Let $i^!: H_*(M)\to H_{*-1}(X)$ be the homological Gysin transformation (as defined in \cite{V,Fu}).
As already mentioned before, the key ingredient used in the proof of Theorem \ref{main} is the following specialization property for the motivic Hirzebruch class transformation ${MHT_y}_*$:
\begin{thm}(\cite{Sc}) \label{main-sch}
${MHT_y}_*$ commutes with specialization, that is: 
\begin{equation}\label{sp}
{MHT_y}_*(\Psi'^H_f(-))=i^! {MHT_y}_*(-): \:K_0({\rm MHM}(M)) \to
 H_*(X) \otimes \Q[y,y^{-1}] \:.
\end{equation}
\end{thm}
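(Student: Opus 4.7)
The plan is to reduce the specialization identity to a statement at the level of the Baum--Fulton--MacPherson Todd class transformation, using the factorization of ${MHT_y}_*$ through the filtered de Rham complex. Concretely, by the very definition of \cite{BSY}, one has
\begin{equation*}
{MHT_y}_*([K]) = \tau_{y*}\!\left(\sum_{p} \bigl[\,\mathrm{gr}^F_{-p}\,\mathrm{DR}(K)\bigr] \cdot (-y)^p\right)
\end{equation*}
for every mixed Hodge module $K$, where $\mathrm{DR}$ is Saito's filtered de Rham functor landing in the bounded derived category of coherent sheaves on the underlying variety, and $\tau_{y*}: G_0(-) \otimes \Q[y,y^{-1}] \to H_*(-) \otimes \Q[y,y^{-1}]$ is a twisted version of the Todd class transformation of \cite{BFM}. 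Once this factorization is in place, the desired identity breaks into two compatibilities: (i) a specialization statement at the level of $K_0(\mathrm{MHM}) \to G_0$ via $\mathrm{gr}^F \mathrm{DR}$, and (ii) a Verdier-type Riemann--Roch statement relating coherent specialization along the hypersurface $X \subset M$ to the homological Gysin map $i^!$.

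For step (i), I would use Saito's description of $\Psi^H_f$ via the Malgrange--Kashiwara V-filtration along $f$, which assigns to a filtered $\mathcal{D}_M$-module underlying a mixed Hodge module a canonical filtered $\mathcal{D}_X$-module underlying $\Psi'^H_f$ of it. The filtered de Rham complex of this V-filtration construction should be shown to represent the coherent specialization of $\mathrm{gr}^F \mathrm{DR}$ in $G_0(X)$; in effect, the V-filtration performs the deformation to the normal cone of $X \subset M$ at the level of the filtered de Rham data. This comparison is the heart of the argument and will require a careful matching between Saito's analytic construction and the algebraic specialization of \cite{V,Fu}.

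For step (ii), I would invoke the Verdier--Riemann--Roch theorem for the regular embedding $i: X \hookrightarrow M$, which intertwines coherent specialization with $i^!$ up to a Todd correction involving the normal line bundle $N_XM$. The $(1+y)$ normalization built into $\tau_{y*}$ is designed precisely so that this correction evaporates in the combined transformation ${MHT_y}_*$, in the same manner that one recovers Verdier's specialization of MacPherson's Chern class transformation at $y=-1$ and the Baum--Fulton--MacPherson Todd class specialization at $y=0$ (both special cases of our theorem, cf.\ \cite{V,BFM}).

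The main obstacle is precisely the matching asserted in step (i): reconciling the V-filtration at the Hodge module level with the normal-cone specialization at the coherent sheaf level. To make this manageable, my plan is to reduce to a convenient set of generators of $K_0(\mathrm{MHM}(M))$. Since both ${MHT_y}_*$ and $\Psi'^H_f$ are compatible with proper pushforward (the latter via proper base change), Hironaka's resolution allows one to reduce to classes of the form $[g_* \Q^H_Y]$ with $g: Y \to M$ proper and $Y$ smooth, and a further blow-up along the pullback $g^{-1}(X)$ reduces to the normal crossings case. In that local model the filtered de Rham complex of $\Psi^H_f$ admits the classical Deligne--Steenbrink description in terms of logarithmic forms, and both the V-filtration and the normal-cone specialization can be written down explicitly, enabling a direct verification that closes the argument.
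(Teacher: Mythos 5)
First, a caveat about the target of comparison: the paper does not prove this theorem. It is quoted from \cite{Sc} and used as a black box; the only gloss the paper adds is that smoothness of $M$ is irrelevant and only the existence of the global equation $X=\{f=0\}$ of codimension one matters. That said, your two-step architecture --- (i) a specialization statement for $MHC_y=\sum_{i,p}(-1)^i[\HC^i(gr^F_{-p}DR(-))](-y)^p$ at the level of $G_0(X)\otimes\Z[y,y^{-1}]$, proved by reducing via proper pushforward and resolution to generators $[g_*\Q^H_Y]$ with $g^{-1}(X)$ a normal crossing divisor, where Saito's nearby cycles admit the explicit Steenbrink/logarithmic description; followed by (ii) Todd-class bookkeeping converting this into the Gysin map $i^!$ --- is indeed the strategy of the cited proof, so your outline is an accurate reading of how the result is actually established.

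Two concrete problems remain. First, step (ii) as you state it is wrong in detail: since $X=\{f=0\}$ is a global hypersurface, $N_XM\simeq f^*T_0\C$ is trivial, so there is no Todd correction of the normal bundle to ``evaporate''; the genuine discrepancy sits at the $MHC_y$-level and is a multiplicative $\lambda_y$-type factor of the conormal bundle. Already for $M=\C$, $f=z$, one has $MHC_y(\Psi'^H_f([\Q^H_\C]))=[\OO_{pt}]$ while $i^*MHC_y([\Q^H_\C])=(1+y)[\OO_{pt}]$, so the correct coherent-level identity carries an extra factor $(1+y)$, and this factor is absorbed not by $td^*(N_XM)$ but by the interaction of the degree shift (the codimension-one Gysin map lowers homological degree by one) with the $(1+y)^{-k}$ weighting in ${td_{(1+y)}}_*$. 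Until you formulate the $MHC_y$-identity with this factor, your step (i) is not even correctly posed, and a literal verification of $MHC_y(\Psi'^H_f(-))=i^*MHC_y(-)$ would fail. Second, the step you yourself flag as the heart of the matter --- identifying $gr^F DR$ of Saito's nearby cycle module, constructed via the $V$-filtration, with the coherent data of the normal crossing model, including the non-unipotent eigenspaces of the semisimple monodromy $T_s$ and the shift conventions forcing the use of $\Psi'^H_f=\Psi^H_f[1]$ rather than $\Psi^H_f$ --- is deferred to ``a direct verification,'' and that is precisely where essentially all of the content of \cite{Sc} lives. As a map of the intended proof your proposal is sound; as a proof it is not yet one.
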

Again the smoothness of $M$ is not needed here, but only the fact that $X=\{f=0\}$ is a global hypersurface (of codimension one).
By the definition of $\Psi_f^m$ in \cite{Bit,GLM}, one has that
$$\Psi_f^m(K_0(var/M))\subset im(K_0(var/X)\to \MC(var/X))\:,$$
so ${T_{y}}_*\circ \Psi_f^m$ maps $K_0(var/M)$ into
$H_*(X)\otimes\Q[y]\subset H_*(X)\otimes\Q[y,y^{-1}]$.
Together with \cite{Sch3}[Prop.5.2.1] one therefore gets the following commutative diagram
of specialisation results:
\begin{equation}\label{special}\begin{CD}
K_0(var/M) @> {T_{y}}_*\circ \Psi_f^m = > i^!\circ {T_{y}}_* > H_*(X)\otimes \Q[y]\\
@V \chi_{Hdg} VV @VVV\\
K_0(\mh(M)) @> {MHT_{y}}_*\circ \Psi'^H_f = > i^!\circ {MHT_{y}}_* > H_*(X)\otimes \Q[y,y^{-1}]\\
@V \chi_{stalk}\circ rat VV @VV y=-1 V \\
F(M) @> c_*\circ \Psi_f = > i^! c_* > H_*(X)\otimes \Q \:.
\end{CD}\end{equation}

\begin{remark}\label{gen}\rm The problem of understanding the class ${{\MC}T_y}_*(-)$ in terms of invariants of the singularities can be formulated in more general contexts, e.g., in the complex analytic setting, for complete intersections or even for regular embeddings of arbitrary codimensions.
And the specialization result of  Theorem \ref{main-sch} can also be used in these cases.
In fact, for {\it global} complete intersections $X=\{f_1=0,\dots,f_k=0\}$ one can iterate this specialization result and get
$${MHT_y}_*(\Psi'^H_{f_1}\circ \cdots \circ \Psi'^H_{f_k}(-))=i^! {MHT_y}_*(-)\:,$$
with $\Psi_{f_1}\circ \cdots \circ \Psi_{f_k}$ related to the Milnor fibration of the
ordered tuple $(f_1,\dots,f_k): M\to \C^k$ in the sense of \cite{MCP}.
And for a general  complete intersection or regular embedding (e.g., for a hypersurface
$X$ without a global equation), one can apply the specialization result
to the so-called ``deformation to the normal cone" (compare \cite{Sch, Sch2} for the case of Milnor-Chern classes). However, for simplicity, we restrict ourselves to the case of globally defined hypersurfaces in complex algebraic manifolds.\end{remark} 

A motivic approach to Milnor-Hirzebruch classes was recently and independently developed by Yokura \cite{Y09}.

\section{Background on Hirzebruch classes of singular varieties}
We assume the reader is familiar with the basics of Saito's theory of algebraic mixed Hodge modules and with the functorial calculus of Grothendieck groups. For a quick survey of these topics see \cite{Sa}, \cite{CMS2}[Sect.3] or \cite{MS}[Sect.2.2-2.3]. We only recall here the construction and main properties of Hirzebruch classes of (possibly singular) complex algebraic varieties, as developed by Brasselet, Sch\"urmann and Yokura in \cite{BSY}.
For the motivic approach in terms of the relative Grothendieck group of complex algebraic varieties
(as indicated in the Introduction) we refer to \cite{BSY}, whereas for the Hodge-theoretical approach used here we refer to the recent overview \cite{Sch3}.\\ 

For any complex algebraic variety $X$, let ${\mh}(X)$ be the abelian category of Saito's algebraic mixed Hodge modules on $X$. For any
$p \in \Z$, M. Saito \cite{Sa1} constructed a functor of triangulated categories
\begin{equation} gr^F_pDR: D^b\mh(X) \to D^b_{coh}(X)\end{equation}
commuting with proper push-down,
with $gr^F_pDR(\MC)=0$ for almost all $p$ and $\MC$ fixed,
where $D^b_{coh}(X)$ is the bounded
derived category of sheaves of $\mathcal{O}_X$-modules with coherent
cohomology sheaves.
If $\Q_X^H \in D^b\mh(X)$ denotes the constant
Hodge module on $X$, and if $X$ is smooth and pure dimensional, then
$gr^F_{-p} DR(\Q_X^H) \simeq \Omega^p_X[-p]$. The transformations
$gr^F_pDR$ induce functors on the level of Grothendieck groups.
Therefore, if $$G_0(X) \simeq K_0(D^b_{coh}(X))$$ denotes the
Grothendieck group of coherent sheaves on $X$, we get a group
homomorphism (the {\em motivic Chern class transformation})
\begin{equation}\label{grF}
MHC_y: K_0(\mh(X)) \to G_0(X) \otimes \Z[y, y^{-1}]\;;
\end{equation}
$$[\MC] \mapsto \sum_{i,p} (-1)^{i} [\HC^i ( gr^F_{-p} DR(\MC) )] \cdot
(-y)^p\:.$$
We let ${td_{(1+y)}}_*$ be the natural transformation (cf. \cite{Y,BSY}):
\begin{equation}\label{td}
{td_{(1+y)}}_*:G_0(X) \otimes \Z[y, y^{-1}] \to H_{*}(X) \otimes
\Q[y, y^{-1}, (1+y)^{-1}]\;;
\end{equation}
$$[\mathcal{F}] \mapsto \sum_{k \geq
0} td_k([\mathcal{F}]) \cdot (1+y)^{-k}\:,$$
where $H_*(X)$  is either the Borel-Moore homology in even degrees $H^{BM}_{2*}(X)$, or the Chow group $CH_*(X)$, and $td_k$ is the
degree $k$ component of the Todd class
transformation $td_*:G_0(X) \to H_*(X) \otimes \Q$ of
Baum-Fulton-MacPherson \cite{BFM, Fu}, which is linearly extended over
$\Z[y, y^{-1}]$.

\begin{defn}\label{D1} The (motivic) Hirzebruch class
transformation ${MHT_y}_*$ is defined by the composition (cf.
\cite{BSY, Sch3})
\begin{equation}\label{IT0}
{MHT_y}_* :={td_{(1+y)}}_* \circ MHC_y: K_0(\mh(X)) \to H_{*}(X)
\otimes \Q[y,y^{-1},(1+y)^{-1}]\:.
\end{equation}
By a recent result of \cite{Sch3}[Prop.5.21], ${MHT_{y}}_*$ takes values in
$$H_{*}(X)\otimes \Q[y,y^{-1}]\subset H_{*}(X)
\otimes \Q[y,y^{-1},(1+y)^{-1}],$$ so that we consider it as a transformation
\begin{equation}\label{IT}
{MHT_y}_* :={td_{(1+y)}}_* \circ MHC_y: K_0(\mh(X)) \to H_{*}(X)
\otimes \Q[y,y^{-1}]\:.
\end{equation}
The (motivic) Hirzebruch class ${T_y}_*(X)$ of a complex algebraic
variety $X$ is then defined by \begin{equation}
{T_y}_*(X):={MHT_y}_*([\Q_X^H]).\end{equation} If $X$ is an $n$-dimensional complex algebraic manifold and $\LL$ is a local system on $X$ underlying an admissible variation of mixed Hodge structures (with quasi-unipotent monodromy at infinity), we define twisted characteristic classes by \begin{equation}
{T_y}_*(X;\LL):={MHT_y}_*([\LL^H]),
\end{equation} where $\LL^H[n]$ is the smooth mixed Hodge module on $X$ with underlying perverse sheaf $\LL[n]$.
Similarly, for $X$ pure-dimensional, we let  \begin{equation}{IT_y}_*(X):={MHT_y}_*([IC'^H_X])\end{equation} be the value of the transformation ${MHT_y}_*$ on the shifted intersection cohomology module $IC'^H_X:=IC^H_X[-{\rm dim}(X)]$. And if $\LL$ is an admissible variation defined on a smooth Zariski open and dense subset of $X$, we set \begin{equation}{IT_y}_*(X;\LL):={MHT_y}_*([IC'^H_X(\LL)]).\end{equation}
\end{defn}
\begin{remark}\rm Over a point, the transformation ${MHT_y}_*$ coincides with the
$\chi_y$-genus ring homomorphism $\chi_y:K_0(\mhs^p) \to
\Z[y,y^{-1}]$ defined on the Grothendieck group of (graded) polarizable mixed Hodge structures by 
\begin{equation} \chi_y([H]):=\sum_p {\rm dim} Gr^p_F(H \otimes \C) \cdot (-y)^p,
\end{equation}
for $F^{\centerdot}$ the Hodge filtration of $H \in \mhs^p$. Here we use the fact proved by Saito that there is an equivalence of categories $\mh(pt)\simeq\mhs^p$.
\end{remark}

By definition, the transformations $MHC_y$ and ${MHT_y}_*$   commute with proper
push-forward, and the following {\it normalization} property holds (cf.
\cite{BSY}): If $X$ is smooth and pure dimensional, then
\begin{equation}\label{nor} {T_y}_*(X)=T_y^*(TX) \cap [X]\:,\end{equation}
where $T_y^*(TX)$ is the
cohomology Hirzebruch class of $X$ (\cite{H}) defined via the power series  \begin{equation}
Q_y(\alpha)=\frac{\alpha(1+y)}{1-e^{-\alpha(1+y)}}-\alpha y \in
\Q[y][[\alpha]],
\end{equation}
that is, \begin{equation}\label{Hsm}
T_y^*(TX)=\prod_{i=1}^{dim(X)}Q_y(\alpha_i),
\end{equation}
where $\{\alpha_i\}$ are the Chern roots of the tangent bundle
$TX$. Note that for the values $y=-1$, $0$, $1$ of the parameter, the class $T_y^*$ reduces to the total Chern class $c^*$, Todd class $td^*$, and $L$-polynomial $L^*$, respectively.\\

It was shown in \cite{BSY} that for any variety $X$ the limits of ${T_y}_*(X)$
for $y=-1,0$ exist, with
\begin{equation}\label{-1}{T_{-1}}_*(X)=c_*(X) \otimes \Q\end{equation}
the total (rational) Chern
class of MacPherson (\cite{MP}). This was recently improved in \cite{Sch3}[Prop.5.21]
to the following result stating that for a complex of mixed Hodge modules $\MC\in D^b\mh(X)$,
\begin{equation}
{MHT_{-1}}_*([\MC]) = c_*([rat(\MC)]) =: c_*(\chi_{stalk}([rat(\MC)]))
\in H_*(X)\otimes\Q
\end{equation}
is the rationalized MacPherson Chern class of the underlying constructible sheaf complex
$rat(\MC)\in D^b_c(X)$ of $\MC$ (i.e., of the constructible function $\chi_{stalk}([rat(\MC)])\in F(X)$).\\

For a variety $X$ with at most ``Du Bois singularities"
(e.g., toric varieties), we have that
\begin{equation}\label{0}{T_0}_*(X)=td_*(X):=td_*([\mathcal{O}_X]) \:,\end{equation}
for  $td_*$ the
Baum-Fulton-MacPherson transformation \cite{BFM, Fu}. And it is still only conjectured that if $X$ is a compact algebraic variety, then ${IT_1}_*(X)$ is the Goresky-MacPherson $L$-class (cf. \cite{BSY}[Rem.5.4]). This is only known in some special cases, e.g., if $X$ has a small resolution of singularities.
If $X$ is projective, the degrees of these classes coincide by Saito's Hodge index theorem for intersection cohomology (see \cite{Sa0}[Thm.5.3.2]), i.e., the following identification holds
\begin{equation}\label{HIH} I\chi_1(X)=\sigma(X),
\end{equation}
for $\sigma(X)$ the Goresky-MacPherson signature of the projective variety $X$.
Also note that if $X$ is a {\it rational homology manifold} then $IC'^H_X\simeq \Q^H_X$, so that
in this case we get that ${IT_{y}}_*(X)={T_{y}}_*(X)$. As a byproduct of results obtained in this paper, we are able to prove the above conjecture for the case of a compact complex algebraic variety $X$ with only isolated singularities (or more generally, with a suitable singular locus, which is smooth with simply-connected components), which is a rational homology manifold that can be realized as a global hypersurface in a complex algebraic manifold; see Section \ref{fin}.

\section{Milnor-Hirzebruch classes of complex hypersurfaces}

\subsection{Milnor-Hirzebruch classes via specialization}
Let, as before, $X=\{f=0\}$ be an algebraic variety defined as the zero-set of codimension one of an algebraic function $f:M \to \C$, for $M$ a  complex algebraic manifold of complex dimension $n+1$.  Let $i:X \hookrightarrow M$ be the inclusion map. Denote by $L$ the trivial line bundle on $M$, obtained by pulling back by $f$ the  tangent bundle of $\C$. Then the virtual tangent bundle of $X$ can be identified with 
\begin{equation}\label{vir} T_{\rm vir}X=[TM\vert_X - L\vert_X]\:,\end{equation}
since $N_XM\simeq f^*N_{\{0\}}\C\simeq L\vert_X$.

Let $$\Psi_f^H, \Phi_f^H: {\rm MHM}(M) \to {\rm MHM}(X)$$ be the nearby and resp. vanishing cycle functors associated to $f$, which are defined on the level of Saito's algebraic mixed Hodge modules \cite{Sa0,Sa1}. These functors induce transformations on the corresponding Grothendieck groups and, by construction,  the following identity holds in $K_0({\rm MHM}(X))$ for any $[\MC]\in K_0(\mh(M))$:
\begin{equation}\label{triangle}
\Psi^H_f([\MC])= \Phi^H_f([\MC])-i^*([\MC])\: .
\end{equation}
Recall that, if $$rat: {\rm MHM}(X) \to {\rm Perv}_{\Q}(X)$$ is the forgetful functor assigning to a mixed Hodge module the underlying perverse sheaf, then $rat \circ \Psi^H_f={{^p}\Psi_f} \circ rat$ and similarly for $\Phi_f^H$. Here ${{^p}\Psi_f}:=\Psi_f[-1]$ is a shift of Deligne's nearby cycle functor \cite{De2}, and similarly for ${{^p}\Phi_f}$. 
So the shifted transformations $\Psi'^H_f:=\Psi^H_f[1]$ and $\Phi'^H_f:=\Phi^H_f[1]$
correspond under $rat$ to the usual nearby and vanishing cycle functors as stated in the Introduction in the commutative diagram (\ref{all-nearby}).\\

Let $i^!:H_*(M) \to H_{*-1}(X)$ denote the Gysin map between the corresponding  homology theories (see \cite{Fu,V}). The following is an easy consequence of the 
{\it specialization property} (\ref{sp}) of Sch\"{u}rmann \cite {Sc} for the Hirzebruch class transformation ${MHT_y}_*$ (cf. 
Thm.\ref{main-sch}):

\begin{lemma}\label{L1}
\begin{equation}\label{eq10} {T_y}^{\rm vir}_*(X):=T_y^*(T_{{\rm vir}}X) \cap [X]={MHT_y}_*(\Psi'^H_f(\left[\Q^H_M\right])).
\end{equation}
\end{lemma}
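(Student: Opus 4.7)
The plan is to apply Sch\"urmann's specialization result (Theorem \ref{main-sch}) to the constant Hodge module $[\Q^H_M] \in K_0(\mh(M))$ and then convert the resulting class on the smooth ambient space $M$ back to $X$ via the Gysin morphism $i^!$. Concretely, the right-hand side of (\ref{eq10}) becomes
$${MHT_y}_*(\Psi'^H_f([\Q^H_M])) \;=\; i^!\, {MHT_y}_*([\Q^H_M]) \;=\; i^!\, {T_y}_*(M),$$
so the content of the lemma is the identification of $i^!\, {T_y}_*(M)$ with $T_y^*(T_{{\rm vir}}X)\cap [X]$.

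Since $M$ is smooth and pure-dimensional, the normalization property (\ref{nor}) yields ${T_y}_*(M)=T_y^*(TM)\cap [M]$. The embedding $i:X\hookrightarrow M$ is a regular closed embedding of codimension one realizing $X$ as a Cartier divisor, so the refined Gysin map $i^!$ sends $[M]$ to $[X]$ and satisfies the projection formula against pull-back of cohomology classes. Consequently
$$i^!\, {T_y}_*(M) \;=\; i^!\bigl(T_y^*(TM)\cap [M]\bigr) \;=\; i^*T_y^*(TM)\cap [X].$$

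It remains to identify $i^*T_y^*(TM)$ with $T_y^*(T_{{\rm vir}}X)$. By (\ref{vir}) we have $T_{{\rm vir}}X=[i^*TM-L|_X]$ in $K^0(X)$, so multiplicativity of the cohomological Hirzebruch class in short exact sequences gives
$$T_y^*(T_{{\rm vir}}X) \;=\; \frac{i^*T_y^*(TM)}{T_y^*(L|_X)}.$$
But $L$ is trivial on $M$ by construction (it is the pullback of the tangent bundle of $\C$), so $L|_X$ is trivial on $X$ and $T_y^*(L|_X)=Q_y(0)=1$. Therefore $T_y^*(T_{{\rm vir}}X)=i^*T_y^*(TM)$, and combining the two displays yields (\ref{eq10}).

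The whole argument is essentially bookkeeping once Theorem \ref{main-sch} is in hand; the only step that requires even a moment's care is the compatibility of $i^!$ with cap products by pulled-back cohomology classes, which is the standard projection formula for regular embeddings in either the Borel-Moore homology or the Chow group setting. No monodromy, stratification, or Milnor fiber input is needed at this stage---those enter only later, when one decomposes the class $\Psi'^H_f([\Q^H_M])-[\Q^H_X]=\Phi'^H_f([\Q^H_M])$ along a Whitney stratification of $X$.
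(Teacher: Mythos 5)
Your proposal is correct and follows essentially the same route as the paper: apply the specialization theorem to $[\Q^H_M]$, use normalization on the smooth ambient $M$, push through the Gysin map, and identify $i^*T_y^*(TM)$ with $T_y^*(T_{\rm vir}X)$ via the triviality of $L$. The only difference is that you spell out the last step (that $T_y^*(L|_X)=Q_y(0)=1$) which the paper leaves implicit in its appeal to (\ref{vir}).
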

\begin{proof} Since $M$ is smooth, it follows that $\Q^H_M[n+1]$ is a mixed Hodge module, i.e. a complex concentrated in degree $0$. And since all our arguments are in Grothendieck groups, in order to simplify the notations, we will work with the shifted object $\Q^H_M \in \mh(M)[-n-1] \subset D^b\mh(M)$, whose class in $K_0(\mh(M))$ is identified with $$[\Q^H_M]=(-1)^{n+1}\cdot \left[\Q^H_M[n+1]\right].$$ 
By applying the identity (\ref{sp}) to the class $[\Q_M^H]\in K_0(\mh(M))$ we have that 
$${MHT_y}_*(\Psi'^H_f(\left[\Q^H_M\right]))=i^! {MHT_y}_*([\Q^H_M])=i^!{T_y}_*(M)=i^!(T_y^*(TM) \cap [M])\:,$$
where the last identity follows from the normalization property (\ref{nor}) of (motivic) Hirzebruch classes as $M$ is smooth. Moreover, by the definition of the Gysin map, the last term of the above identity becomes $i^*(T_y^*(TM)) \cap [X]$, which by the identification in (\ref{vir}) is simply equal to ${T_y}^{\rm vir}_*(X)$.

\end{proof}

We can now prove the following key result on the characterization of the Milnor-Hirzebruch class $\MC{T_y}_*(X)$:

\begin{thm}\label{M} The Milnor-Hirzebruch class of a globally defined hypersurface $X=f^{-1}(0)$ (of codimension one) in a complex algebraic manifold $M$ is entirely determined by the vanishing cycles of $f:M \to \C$. More precisely,  
\begin{equation}\label{eq20}
\MC{T_y}_*(X):={T_y}^{\rm vir}_*(X) - {T_y}_*(X)={MHT_y}_*(\Phi'^H_f(\left[\Q^H_M\right])).
\end{equation}
\end{thm}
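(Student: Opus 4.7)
The plan is to combine Lemma \ref{L1} with the fundamental exact triangle relating nearby cycles, vanishing cycles, and restriction, rewritten in the Grothendieck group. Everything takes place inside $K_0(\mh(X))$ and $H_*(X)\otimes\Q[y,y^{-1}]$, and the functoriality of ${MHT_y}_*$ does the rest.

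First I would record Lemma \ref{L1}, which already gives
\begin{equation*}
{T_y}^{\rm vir}_*(X)={MHT_y}_*\bigl(\Psi'^H_f([\Q^H_M])\bigr).
\end{equation*}
Next I would translate the identity (\ref{triangle}) from the unshifted to the shifted functors. Since in a Grothendieck group a shift by $[1]$ amounts to multiplication by $-1$, the equation $\Psi^H_f([\MC])=\Phi^H_f([\MC])-i^*([\MC])$ becomes, after applying $[1]$ to both sides and taking classes,
\begin{equation*}
\Psi'^H_f([\MC])=\Phi'^H_f([\MC])+i^*([\MC]),\qquad \text{so}\qquad \Phi'^H_f([\MC])=\Psi'^H_f([\MC])-i^*([\MC]).
\end{equation*}

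I would then specialize this to $\MC=\Q^H_M$. Because $i:X\hookrightarrow M$ is a closed embedding, $i^*\Q^H_M=\Q^H_X$, so by definition
\begin{equation*}
{MHT_y}_*\bigl(i^*[\Q^H_M]\bigr)={MHT_y}_*([\Q^H_X])={T_y}_*(X).
\end{equation*}
Applying ${MHT_y}_*$ to the displayed identity and using Lemma \ref{L1} then gives
\begin{equation*}
{MHT_y}_*\bigl(\Phi'^H_f([\Q^H_M])\bigr)={T_y}^{\rm vir}_*(X)-{T_y}_*(X)=\MC{T_y}_*(X),
\end{equation*}
which is exactly (\ref{eq20}).

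There is essentially no obstacle; the only point that requires a word of care is the bookkeeping of the shift between $\Psi^H_f,\Phi^H_f$ and $\Psi'^H_f,\Phi'^H_f$, to make sure the signs in the triangle relation are compatible with the convention used for $\Psi'^H_f$ in Lemma \ref{L1} (and hence with the specialization statement Theorem \ref{main-sch} of Sch\"urmann, which is where all the real content sits). Once that is checked, the theorem is a one-line consequence of Lemma \ref{L1}, the exact triangle in $K_0(\mh(X))$, and the triviality $i^*\Q^H_M=\Q^H_X$.
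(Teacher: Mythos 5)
Your proposal is correct and follows the paper's own argument exactly: the paper likewise combines Lemma \ref{L1} with the Grothendieck-group identity (\ref{triangle}) applied to $[\Q^H_M]$ (rewritten as $\Phi_f^H([\Q^H_M])=\Psi_f^H([\Q^H_M])+[\Q^H_X]$) and then applies ${MHT_y}_*$. Your explicit bookkeeping of the shift $\Psi'^H_f=\Psi^H_f[1]$ as a sign in $K_0(\mh(X))$ is the same point the paper handles implicitly, and it is done correctly.
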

\begin{proof} 
By applying the identity (\ref{triangle}) to the class $\left[\Q^H_M\right]$ of the constant Hodge sheaf  on $M$, we obtain the following equality in $K_0({\rm MHM}(X))$:
\begin{equation}\label{eq30}
\Phi_f^H(\left[\Q^H_M\right])]= \Psi_f^H(\left[\Q^H_M\right])+[\Q^H_X].
\end{equation}
The desired identity follows now from Lemma \ref{L1} after applying the natural transformation ${MHT_y}_*$ to equation (\ref{eq30}).

\end{proof}

Since the rational complex $\Phi^H_f(\Q_M)$ is supported only on the singular locus $X_{\rm sing}$ of $X$ (i.e., on the set of points in $X$ where the differential $df$ vanishes), the result of Theorem \ref{M} shows that the difference ${T_y}^{\rm vir}_*(X) - {T_y}_*(X)$ can be expressed entirely only in terms of invariants of the singularities of $X$. Namely, by the functoriality of the transformations ${T_y}_*$ and ${MHT_y}_*$ (for the closed inclusion $X_{\rm sing}\hookrightarrow X$), we can view
\begin{equation}\label{loc}\MC{T_y}_*(X):= {T_y}_*(\Phi^m_f([id_M]))=
{MHT_y}_*(\Phi'^H_f(\left[\Q^H_M\right])) \in H_*(X_{\rm sing})\otimes\Q[y]\end{equation}
as a {\it localized} Milnor-Hirzebruch class.
Therefore, we have the following
\begin{cor}
The classes ${T_y}^{\rm vir}_*(X)$ and ${T_y}_*(X)$ coincide in dimensions greater than the dimension of the singular locus of $X$.
\end{cor}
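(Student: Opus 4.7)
The plan is to deduce the corollary directly from Theorem \ref{M} together with a standard dimension bound on the homology of the singular locus. By equation (\ref{loc}), we already have
\[
\MC{T_y}_*(X) \;=\; {T_y}^{\rm vir}_*(X) - {T_y}_*(X) \;=\; {MHT_y}_*(\Phi'^H_f([\Q^H_M])) \;\in\; H_*(X_{\rm sing})\otimes\Q[y],
\]
the membership on the right being understood via the pushforward induced by the closed inclusion $i_{\rm sing}: X_{\rm sing}\hookrightarrow X$. So essentially everything is already packaged: the corollary is a statement about where a known class lives.

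First I would recall why the class genuinely factors through $H_*(X_{\rm sing})$: by the classical description of vanishing cycles, the complex $\Phi_f(\Q_M)$ has stalk zero at every point of $X$ at which $df\neq 0$, hence $\Phi'^H_f([\Q^H_M])$ is supported on $X_{\rm sing}$. By the functoriality of ${MHT_y}_*$ for the closed (in particular proper) inclusion $i_{\rm sing}: X_{\rm sing}\hookrightarrow X$, the image
${MHT_y}_*(\Phi'^H_f([\Q^H_M]))\in H_*(X)\otimes \Q[y]$
is the pushforward of a class in $H_*(X_{\rm sing})\otimes\Q[y]$, and this pushforward class equals $\MC{T_y}_*(X)$.

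Next I would invoke the standard dimension bound: for either of the two homology theories allowed in the statement — Borel--Moore homology in even degrees $H^{BM}_{2*}$ or the Chow group $CH_*$, each graded by \emph{complex} dimension — one has $H_k(Y)=0$ for every complex algebraic variety $Y$ and every $k>\dim_{\C} Y$. Applying this with $Y=X_{\rm sing}$, the pushforward $(i_{\rm sing})_*: H_k(X_{\rm sing})\to H_k(X)$ is automatically zero for $k>\dim X_{\rm sing}$. Consequently the degree-$k$ component of $\MC{T_y}_*(X)$ in $H_k(X)\otimes\Q[y]$ vanishes for every $k>\dim X_{\rm sing}$, so ${T_y}^{\rm vir}_*(X)$ and ${T_y}_*(X)$ agree in those degrees. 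There is no real obstacle: the substantive content is already contained in Theorem \ref{M}, and the corollary reduces to observing that the localized Milnor--Hirzebruch class, being supported on $X_{\rm sing}$, cannot contribute in homological degrees exceeding $\dim X_{\rm sing}$.
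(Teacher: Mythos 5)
Your proposal is correct and follows essentially the same route as the paper: the paper likewise derives the corollary from the localization statement that $\MC{T_y}_*(X)={MHT_y}_*(\Phi'^H_f([\Q^H_M]))$ lives in $H_*(X_{\rm sing})\otimes\Q[y]$ (via functoriality for the closed inclusion $X_{\rm sing}\hookrightarrow X$), the only difference being that you make explicit the dimension bound $H_k(Y)=0$ for $k>\dim_{\C}Y$, which the paper leaves implicit.
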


\begin{remark}\rm The nearby and vanishing cycle functors $\Psi_f^H, \Phi_f^H: {\rm MHM}(M) \to {\rm MHM}(X)$ have a functor automorphism $T_s$ of finite order, induced by the semisimple part of the monodromy $T$.
We have the decomposition $\Psi_f^H=\Psi^H_{f,1} \oplus \Psi^H_{f,\neq 1}$ such that $T_s=id$ on $\Psi^H_{f,1}$ and $1$ is not an eigenvalue of $T_s$ on $\Psi^H_{f,\neq 1}$, and similar for $\Phi_f^H$. By further decomposition into generalized eigenspaces the action of $T_s$ on the (complexification of the) mixed Hodge structures $\HC^j i_x^* \Phi^H_f \MC$ ($j \in \Z$), for $i_x:\{x\} \hookrightarrow X$ the inclusion of a point,  Saito \cite{Sa3} defined the {\it spectrum $hsp(\MC,f,x)$ of a (complex of) mixed Hodge module(s)} $\MC\in K_0(\mh(M))$, which is a generalization of Steenbrink's {\it Hodge spectrum} for hypersurface singularities \cite{St2,St4,Va} (see also \cite{GLM} for motivic analogues of vanishing cycles and Hodge spectrum). In this note we do not need to take into account these monodromy functors. However, the Hirzebruch-type invariants associated to the local Milnor fibers which appear in our formulae can be, in fact, computed from this well-studied Hodge spectrum information (see Remark \ref{spec} below).
\end{remark}

\begin{remark}\label{deg} (The degree of Milnor-Hirzebruch class) \rm\newline If $f:M \to \C$ is proper, the degree of the (zero-dimensional piece of the) Milnor-Hirzebruch class is computed by 
\begin{equation}\label{pd}
{\rm deg} \left( \MC {T_y}_*(X)\right) := \int_{[X]} {T_y}^{\rm vir}_*(X) - {T_y}_*(X)=\chi_y(X_t) - \chi_y(X),
\end{equation}
with $X_t:=f^{-1}(t)$ (for $t \neq 0$ small enough) the generic fiber of $f$.  In order to see this, first note that by pushing-down under $Rf_*$ the specialization identity $${MHT_y}_*(\Psi'^H_f([\Q^H_M]))=i^! {MHT_y}_*([\Q^H_M]),$$ one obtains the equality between the Hodge polynomial associated to the {\it limit mixed Hodge structure} on the cohomology of the {\it canonical fiber} $X_{\infty}$ (e.g., see \cite{PS}, \S 11), i.e.,  $$\chi_y(X_{\infty}):=-\chi_y([H^*(X;\Psi_f^H \Q^H_M)])$$ and respectively that of the nearby (smooth) fiber of $f$, $\chi_y(X_t)$. Then (\ref{pd}) follows by pushing-down under $Rf_*$ the identity of (\ref{triangle}) and then applying the transformation ${MHT_y}_*$ (which in this case reduces to the ring homomorphism $\chi_y$); compare with \cite{CLMS}[Sect.3.2].
Therefore, the formulae obtained in this note are indeed characteristic class generalizations of the results from \cite{CLMS}, as mentioned in the Introduction of the present paper.
\end{remark}

\subsection{Computational aspects. Examples}

We now illustrate by simple examples how one can explicitly compute the Milnor-Hirzebruch class $\MC{T_y}_*(X)$ in terms of invariants of the singular locus.

\begin{example}\label{ex:iso} \ {\rm Isolated singularities.} \newline
If the hypersurface $X$ has only isolated singularities, the corresponding vanishing cycles complex $\phi_f\Q_M$ is supported only at these singular points, and by Theorem \ref{M} we obtain:
\begin{equation}\label{iso} 
\MC{T_y}_*(X)=\sum_{x \in X_{\rm sing}} \chi_y(i_x^*\Phi'^H_f([\Q^H_M]))=\sum_{x \in X_{\rm sing}} (-1)^{n} \chi_y([\tilde{H}^n(F_x;\Q)]),
\end{equation}
where $i_x:\{x\} \hookrightarrow X$ is the inclusion of a point, and $F_x$ is the Milnor fiber of the isolated hypersurface singularity $(X,x)$ (which in this case is $(n-1)$-connected).
\end{example}

\begin{remark}\label{spec} (Hodge polynomials vs. Hodge spectrum) \rm\newline
Let us now point out the precise relationship between the Hodge spectrum and the less-studied Hodge polynomial of the Milnor fiber of a hypersurface singularity. We follow here notations and sign conventions similar to those from \cite{GLM}. Denote by $\mhs^{\rm mon}$ the abelian category of mixed Hodge structures endowed with an automorphism of finite order, and by $K^{mon}_0(\mhs)$ the corresponding Grothendieck ring. There is a natural linear map called the {\it Hodge spectrum}, $${\rm hsp}: K^{\rm mon}_0(\mhs) \to \Z[\Q] \simeq \bigcup_{n \geq 1} \Z[t^{1/n},t^{-1/n}],$$ such that
\begin{equation}\label{hsp}
{\rm hsp}([H]):=\sum_{ \alpha \in \Q \cap [0,1)} t^{\alpha} \left( \sum_{p \in \Z} {\rm dim}(Gr^{p}_F H_{\C,\alpha}) t^p \right)
\end{equation}
for any mixed Hodge structure $H$ with an automorphism $T$ of finite order, where $H_{\C}$ is the underlying complex vector space of $H$, $H_{\C,\alpha}$ is the eigenspace of $T$ with eigenvalue 
$\exp(2\pi i \alpha)$, and $F$ is the Hodge filtration on $H_{\C}$.
It is now easy to see that the $\chi_y$-polynomial of $H$ is obtained from ${\rm hsp}([H])$ by equating to $1$ the parameter $t$ corresponding to fractional powers $\alpha \in \Q \cap [0,1)$, and by setting the $t$ of integer powers be equal to $-y$.  Lastly, the Hodge spectrum of hypersurface singularities (where one applies the above construction for the cohomology of the Milnor fiber endowed with the action of the semisimple part of the monodromy) has been studied in many cases, e.g., for isolated weighted homogeneous hypersurface singularities (\cite{St3}) or isolated hypersurface singularities with non-degenerate Newton polyhedra (\cite{St2,Sa4}), but see also \cite{Ku,NS}. (For the relation to the original definition of Steenbrink of the Hodge spectrum see e.g. \cite{Ku}[Sect.8.10].) In all these cases, we can therefore compute the $\chi_y$-polynomials appearing in our formulae. (In fact for isolated
hypersurface singularities the corresponding spectrum can also be calculated by computer programs, e.g. see \cite{Slz}).
\end{remark}

\begin{example}\label{smooth} \ {\rm Smooth singular locus.} \newline
At the other extreme, let us now assume that $X$ has a smooth singular locus $\Sigma$, which for simplicity is assumed to be connected. Moreover, suppose that $\phi_f\Q_M$ is a constructible complex with respect to the stratification of $X$ given by the strata $\Sigma$ and $X \setminus \Sigma$ (e.g., this is the case if the filtration $\Sigma \subset X$ corresponds to a Whitney stratification of $X$). If $r={\rm dim}_{\C} \Sigma < n$, the Milnor fiber $F_x$ at a point $x \in \Sigma$ has the homotopy type of an  $(n-r)$-dimensional CW complex, which moreover is $(n-r-1)$-connected. So in $K_0(\mh(X))$ the following identification holds: $$\Phi^H_f([\Q^H_M])=(-1)^{n-r-1} \cdot [\LL^H_{\Sigma}],$$ for $\LL_{\Sigma}$ the admissible variation of mixed Hodge structures  (on $\Sigma$) with stalk at $x \in \Sigma$ given by $H^{n-r}(F_x;\Q)$. Therefore, Theorem \ref{M} yields that:
\begin{equation}\label{sm}
\MC{T_y}_*(X)=(-1)^{n-r} \cdot {T_y}_*(\Sigma;\LL_{\Sigma}), 
\end{equation}
with ${T_y}_*(\Sigma;\LL_{\Sigma}):={MHT_y}_*([\LL^H_{\Sigma}])$ the twisted characteristic class corresponding to the admissible variation $\LL_{\Sigma}$ on $\Sigma$ (cf. Def. \ref{D1}). Formulae describing the calculation of such classes are obtained in the authors' papers \cite{CLMS2,CLMS,MS,Sch3}. In particular, if $\pi_1(\Sigma)=0$, formula (\ref{sm}) reduces to:
\begin{equation}\label{smm}
\MC{T_y}_*(X)=(-1)^{n-r} \cdot \chi_y([H^{n-r}(F_x;\Q)]) \cdot {T_y}_*(\Sigma), 
\end{equation}
which is just a particular case of formula (\ref{eq1}). 

Note that, if $N$ is a normal slice to $\Sigma$ at $x$ (i.e., $N$ is a smooth analytic subvariety  of $M$, transversal to $\Sigma$ at $x$), it follows that \begin{equation}\label{slice} \chi_y([H^{n-r}(F_x;\Q)])=\chi_y([H^{n-r}(F_{N,x};\Q)]),\end{equation} where $F_{N,x}$ is the Milnor fiber of the isolated singularity germ $(X \cap N,x)$ defined (locally in the analytic topology) by restricting $f$ to a normal slice $N$ at $x$. Indeed, by \cite{DMST}[Cor.1.5], the spectrum, thus the $\chi_y$-polynomial, is preserved by restriction to a normal slice. (Here, our sign conventions in the definition of the spectrum cancel out the sign issues appearing in \cite{DMST}.) In particular, this ``normal" information to the singular stratum is computable as mentioned in Remark \ref{spec}.
\end{example}

Before giving a very concrete example, we begin with the following considerations. 
Let $f: \C^{n+1} \to \C$ be a polynomial function, and denote the coordinates of $\C^{n+1}$ by $x_1,\cdots, x_{n+1}$. Assume $f$ depends only on the first $n-k+1$ coordinates $x_1,\cdots,x_{n-k+1}$, and 
it has an isolated singularity at $0\in \C^{n-k+1}$ when regarded as a polynomial function on $ \C^{n-k+1}$. If $X:=f^{-1}(0) \subset \C^{n+1}$, then the singular locus $\Sigma$ of $X$ (or $f$) is the affine space $\C^k$ corresponding to the remaining coordinates $x_{n-k+2},\cdots,x_{n+1}$ of $\C^{n+1}$, and the filtration $\Sigma \subset X$ induces a Whitney stratification of $X$. The transversal singularity in the normal direction to $\Sigma$ at a point $x \in \Sigma$ is exactly the isolated singularity at $0 \in \C^{n-k+1}$ mentioned above. 
Since $\Sigma$ is smooth and simply-connected, we get by Example \ref{smooth} the identity
$${\MC T_y}_*(X)=(-1)^{n-k}{\chi_y([\tilde{H}^{n-k}(F_0;\Q)])} \cdot [\C^k] \in H_*(X) \otimes \Q[y],$$
with $F_0$ the Milnor fiber of $f: \C^{n-k+1}\to \C$ at $0$.

Let us now assume that the above isolated singularity in $\C^{n-k+1}$ is a Brieskorn-Pham singularity, i.e., defined by 
$$f(x_1,...,x_{n-k+1}):= \sum_{j=1}^{n-k+1} x_j^{w_j}$$
with $w_j \geq 2$. Then by the Thom-Sebastiani theorem (e.g., see \cite{GLM}[Thm.5.18]) one has the following computation of the Hodge spectrum:
\begin{equation}{\rm hsp}([\tilde{H}^{n-k}(F_0;\Q)])=
\prod_{j=1}^{n-k+1} (\prod_{i=1}^{w_j-1} t^{i/w_j}).\end{equation}
And this can be specialized by Remark \ref{spec} to a calculation of the $\chi_y$-polynomial of $F_0$.  
 
In particular, by applying the above considerations to the polynomial $f:\C^{n+1} \to \C$ given by $$f(x_1,\cdots,x_{n+1})=(x_1)^2+\cdots (x_{n-k+1})^2, \ \ \ k \geq 0,$$ 
we obtain for $X:=f^{-1}(0)$ that $${\MC}T_{y*}(X)=(-y)^{\lceil \frac{n-k}{2} \rceil} \cdot [\C^k] \in H_*(X) \otimes \Q[y],$$ where ${\lceil - \rceil}$ denotes the rounding-up to the nearest integer.

\begin{example}\label{topdeg} \ {\rm The top degree of the Milnor-Hirzebruch class.} \newline
Let $\Sigma:=X_{\rm sing}$ be the singular locus of $X$, and denote by $\Sigma_{\rm reg}:=\Sigma\backslash (X_{\rm sing})_{\rm sing}$ its regular part. Assume for simplicity that $\Sigma$ is irreducible. Then, if $r:={\rm dim}_{\C} \Sigma$,  the long exact sequence in Borel-Moore homology
$$\cdots \to H_*^{BM}(\Sigma_{\rm sing}) \to H_*^{BM}(\Sigma) \to H_*^{BM}(\Sigma_{\rm reg}) \to H_{*-1}^{BM}(\Sigma_{\rm sing}) \to \cdots$$ yields the isomorphism \begin{equation} \label{to}
H_{2r}^{BM}(\Sigma) \simeq H_{2r}^{BM}(\Sigma_{\rm reg}).
\end{equation}
And since $\Sigma_{\rm reg}$ is smooth and connected, we get by Poincar\'e Duality that $$H_{2r}^{BM}(\Sigma_{\rm reg}) \simeq H^0(\Sigma_{\rm reg}) \simeq \Z,$$ and also 
$$H_{i}^{BM}(\Sigma) \simeq H_{i}^{BM}(\Sigma_{\rm reg}) \simeq 0, \ {\rm for} \ i>2r.$$ Therefore, 
$H^{BM}_{top}(\Sigma)\simeq \Z$, and is generated by the fundamental class $[\Sigma]$. 

The top degree of the Milnor-Hirzebruch class lies in $H_{top}(\Sigma) \otimes \Q[y]$, where $H_{top}(\Sigma)$ denotes as before either the top Borel-Homology group or the top Chow group. In fact, note that there is a group isomorphism $CH_r(\Sigma) \overset{\simeq}{\to} H_{2r}^{BM}(\Sigma)$. So, we can write:
\begin{equation}
\MC{T_y}_*(X)=m_{\Sigma}(y) \cdot [\Sigma] +  {\rm ``lower \ terms"} \  \in H_{top}(\Sigma) \otimes \Q[y] \oplus \cdots ,
\end{equation}
where $m_{\Sigma}(y)$ denotes the multiplicity of the Milnor-Hirzebruch class along (the regular part of) $\Sigma$. This multiplicity can be computed (locally, in the analytic topology) in a normal slice $N$ at a point $x \in \Sigma_{\rm reg}$. And just as in Example \ref{smooth}, it follows that 
\begin{equation}
m_{\Sigma}(y) =(-1)^{n-r}\cdot \chi_y([H^{n-r}(F_{N,x};\Q)]),\end{equation} 
where $F_{N,x}$ is the Milnor fiber of the isolated singularity germ $(X \cap N,x)$ defined (locally in the analytic topology) by restricting $f$ to a normal slice $N$ at $x \in \Sigma_{\rm reg}$.
\end{example}

\begin{remark}\rm
In general, 
for $\Sigma$ an $r$-dimensional irreducible component of $X_{\rm sing}$, one has
canonical arrows (factorising the isomorphism (\ref{to}) above)
$$H^{BM}_{2r}(\Sigma) \to H^{BM}_{2r}(X_{\rm sing})
\to H^{BM}_{2r}(\Sigma_{\rm reg}),$$
so that the first arrow is injective. Therefore the arguments of Example \ref{topdeg}
can be applied to all irreducible components of the singular locus of $X$.
Specializing further to $y=-1$, we get that the corresponding
``top-dimensional" multiplicity
of the localized Milnor class along $\Sigma$ is given by the Euler characteristic
$\chi(\tilde{H}^*(F_{N,x};\Q))$ of the Milnor fiber in a transversal slice.
This fits with the
corresponding result of \cite{BLSS2}, but it was not explicitly stated  in \cite{Sch,Sch2}.
\end{remark}

In more general situations, the calculation of the Milnor-Hirzebruch class of $X$ requires a better understanding of the  delicate monodromy problem, as we shall explain below. First note that we can describe the Grothendieck group $K_0(\mh(X))$ of mixed Hodge modules on $X$ as: \begin{equation}\label{K0} K_0(\mh(X))= K_0(\hp(X)^p), \end{equation} where $\hp(X)^p$ denotes the abelian category of pure polarizable Hodge modules \cite{Sa0}. And by the decomposition by strict support of pure Hodge modules, it follows that $K_0(\hp(X)^p)$ is generated by elements of the form $[IC_S^H(\LL)]$, for $S$ an irreducible closed subvariety of $X$ and $\LL$ a polarizable variation of Hodge structures (admissible at infinity) defined on a smooth Zariski open and dense subset of $S$. Thus the image of the natural transformation ${MHT_y}_*$ is generated by { twisted} characteristic classes $${IT_y}_*(S;\LL):={MHT_y}_*([IC'^H_S(\LL)]),$$ with $IC'^H_S(\LL):=IC_S^H(\LL)[-{\rm dim}_{\C}(S)]$, and $S$ and $\LL$ as above. Moreover, since by Theorem~\ref{M}, the Milnor-Hirzebruch class is supported only on the singular locus 
$X_{\rm sing}$ of $X$, the class $\MC{T_y}_*(X)$ is generated only by classes of the form ${IT_y}_*(S;\LL)$ with $S$ an irreducible closed subvariety contained in $X_{\rm sing}$, and with $\LL$ as above.  The calculation of such twisted characteristic classes is in general very difficult. Results in this direction, usually referred to as ``Atiyah-Meyer type formulae", are described in some special cases in \cite{CLMS2,CLMS,MS,Sch3}.

Another set of generators for the Grothendieck group $K_0(\mh(X))$ can be obtained by using resolutions of singularities. More precisely, $K_0(\mh(X))$ is generated by elements of the form $[p_*(j_*\LL')]$ (or $[p_*(j_!\LL')]$) , with $p:Z \to X$ a proper algebraic map from a smooth algebraic manifold $Z$, $j:U=Z\setminus D \hookrightarrow Z$ the open inclusion of the complement of a normal crossing divisor $D$ with smooth irreducible components, and $\LL'$ an admissible variation of mixed Hodge structures on $U$. By the functoriality of ${MHT_y}_*$, it suffices to understand the characteristic classes of the form ${MHT_y}_*(j_*\LL')$ (or ${MHT_y}_*(j_!\LL')$), with $j$ and $\LL'$ as above. Such classes can be computed in terms of the twisted logarithmic de Rham complex associated to the Deligne extension of $\LL'$ to $(Z,D)$. For generators of the form $[p_*(j_*\LL')]$, the corresponding classes are calculated in \cite{CLMS2,CLMS,MS,Sch3}. Similar arguments apply to the calculation of classes associated to generators $[p_*(j_!\LL')]$, but using  a different Deligne extension, with residues in the half-open interval $(0,1]$; compare  \cite{Sa1}[Sect.3.10-3.11].

We conclude this section with a discussion on the following situation.
\begin{example} \ {\rm One-dimensional singular locus.} \newline
Assume the singular locus $X_{\rm sing}$ of the hypersurface $X$ is one-dimensional, and consider an adapted stratification
$S\subset X_{\rm sing}$ with $S$ zero-dimensional. If $i:S \hookrightarrow X_{\rm sing}$ denotes 
the inclusion map, and $j$ is the inclusion of the open complement of $S$ in $X_{\rm sing}$, then 
by using the distinguished triangle
$j_!j^* \to id \to i_*i^* \to $  applied to
$\Phi'^H_f([\Q^H_M])$, one can reduce the calculation of 
${\MC T_y}_*(X)$ to the following:
\begin{enumerate}
\item the calculation of $\chi_y([\tilde{H}^*(F_x;\Q)]$ at the isolated points
$x\in S$. These points are in general non-isolated singularities of $X$,
but their spectrum can be reduced to the calculation of spectrum of isolated hypersurface
singularities defined by deformations $f+g^N$, for $g$ a generic linear form. This is the content of Steenbrink's conjecture \cite{St4}, proved in the general case by Saito \cite{Sa3} (cf. also \cite{GLM}[Thm.6.10]).
\item classes of the form ${MHT_y}_*(j_!\LL')$ wich can be calculated as sketched above.
\end{enumerate}
\end{example}

\subsection{Computation of Milnor-Hirzebruch classes by Grothendieck calculus}\label{sec.comp}
We now turn to the proof of the second part of Theorem \ref{main} from the Introduction, where for simplicity we assume that the monodromy contributions along all strata in a stratification of $X$ are trivial, e.g., all strata are simply-connected. This assumption allows us to identify the coefficients in the above generating sets of $K_0(\mh(X))$, and to obtain precise formulae for the Milnor-Hirzebruch class as a direct application of the specialization property (\ref{sp}) combined with standard calculus in Grothendieck groups. We begin by recalling some useful results from \cite{CMS2}. 
Let $X$ be a pure-dimensional complex algebraic variety endowed with a
complex algebraic Whitney stratification $\VV$ so that the
intersection cohomology complexes $$IC'_{\bar W}:=IC_{\bar W}[-{\rm
dim}(W)]$$ are $\VV$-constructible for all strata $W \in \VV$. Let us fix for each
$W \in \VV$ a point $w \in W$ with inclusion $i_w:\{w\}
\hookrightarrow X$. Then
\begin{equation}\label{100} i_w^*[IC'^H_{\bar W}]=[i_w^*IC'^H_{\bar
W}]=[\Q^H_{pt}]\in K_0(MHM(w))=K_0(MHM(pt)),\end{equation} and
$i_w^*[IC'^H_{\bar V}] \neq [0] \in K_0(MHM(pt))$ only if $W \subset 
{\bar V}$. Moreover,  in this case we have that for any $j \in \Z$, 
\begin{equation}\label{cone} \HC^j (i_w^*IC'_{\bar V}) \simeq
IH^j(c^{\circ}L_{W,V}),\end{equation} for $c^{\circ}L_{W,V}$ the
open cone on the {\it link} $L_{W,V}$ of $W$ in $\bar V$.  So
$$i_w^*[IC'^H_{\bar V}]=[IH^*(c^{\circ}L_{W,V})] \in K_0(MHM(pt)),$$
with the mixed Hodge structures on the right hand side defined by
the isomorphism (\ref{cone}). For future reference, let us set:
$$I\chi_y(c^{\circ}L_{W,V}):=\chi_y([IH^*(c^{\circ}L_{W,V})]).$$
One of the main results of \cite{CMS2} can now be stated as follows:
\begin{thm}\label{CMS} \ (\cite{CMS2}, Theorem 3.2)\newline Let $\VV_0$ be the set of all singular strata of $X$, i.e., strata $V\in \VV$ so that ${\rm dim}(V)<{\rm dim}(X)$. 
For each $V \in \VV_0$ define inductively
\begin{equation}\label{eq8}
\widehat{IC^H}(\bar V):=[IC'^H_{\bar V}] - \sum_{W < V}
\widehat{IC^H}(\bar W) \cdot i_w^* [IC'^H_{\bar V}] \in
K_0(MHM(X)), 
\end{equation} where the summation is over all strata $W \subset {\bar V} \setminus V$. 
Assume $[\MC] \in K_0(MHM(X))$ is an
element of the $K_0(MHM(pt))$-submodule $\langle [IC'^H_{\bar V}]
\rangle$ of $K_0(MHM(X))$ generated by the elements $[IC'^H_{\bar
V}]$, $V \in \VV$. Then we have the following equality  in
$K_0(MHM(X))$:
\begin{equation}\label{mE}
[\MC]=\sum_{S\in \pi_0(X_{\rm reg})} [IC'^H_{\bar S}] \cdot i_s^*[\MC]+\sum_{V \in \VV_0}  \widehat{IC^H}(\bar V)
\cdot \left( i_v^*[\MC] -\sum_{S\in \pi_0(X_{\rm reg})} i_s^*[\MC] \cdot i_v^*[IC'^H_{\bar S}] \right),
\end{equation}
where $\pi_0(X_{\rm reg})$ stands for the set of connected components of the regular (top dimensional) stratum in $X$.
\end{thm}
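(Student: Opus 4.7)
My plan is to recognize the mixed set $\{[IC'^H_{\bar S}]\}_{S \in \pi_0(X_{\rm reg})} \cup \{\widehat{IC^H}(\bar V)\}_{V \in \VV_0}$ as an alternative generating set of the $K_0(MHM(pt))$-submodule $\langle [IC'^H_{\bar V}] \rangle$, designed precisely so that the coefficients of any element $[\MC]$ in this set can be recovered directly from its stalks $i_w^*[\MC]$. Formula (\ref{mE}) will then drop out by pulling $[\MC]$ back to points of the various strata.

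The crucial technical step is to prove, by induction on $V$ in the stratification order (where $W < V$ means $W \subsetneq \bar V$), the orthogonality relation
\begin{equation*}
i_w^* \widehat{IC^H}(\bar V) = \begin{cases} [\Q^H_{pt}] & \text{if } w \in V, \\ 0 & \text{if } w \notin V. \end{cases}
\end{equation*}
If $w \notin \bar V$, the claim holds by support, since every summand in (\ref{eq8}) is supported on $\bar V$. For $v \in V$, evaluating the recursion at $v$, each term $i_v^* \widehat{IC^H}(\bar W)$ with $W < V$ vanishes because $\bar W \subset \bar V$ is a union of strata not containing $V$ (by antisymmetry, $W < V$ forces $V \not\leq W$), so $v \notin \bar W$; thus the subtraction is zero and one is left with $i_v^*[IC'^H_{\bar V}] = [\Q^H_{pt}]$. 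For $u \in U$ with $U < V$, evaluation of (\ref{eq8}) at $u$ combined with the inductive hypothesis kills every summand except $W = U$, and that single surviving term cancels $i_u^*[IC'^H_{\bar V}]$ exactly.

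The recursion (\ref{eq8}) itself shows, by a parallel induction on $V$, that each $[IC'^H_{\bar V}]$ lies in the $K_0(MHM(pt))$-span of the $\widehat{IC^H}(\bar W)$'s with $W \leq V$, so the mixed set above indeed generates $\langle [IC'^H_{\bar V}] \rangle$. Writing any $[\MC]$ in this submodule as
\begin{equation*}
[\MC] = \sum_{S \in \pi_0(X_{\rm reg})} \alpha_S \cdot [IC'^H_{\bar S}] + \sum_{V \in \VV_0} \beta_V \cdot \widehat{IC^H}(\bar V),
\end{equation*}
applying $i_s^*$ at a top-stratum point $s \in S$ kills every $\widehat{IC^H}(\bar V)$ with $V \in \VV_0$ by support (since $\bar V \subsetneq X$), which yields $\alpha_S = i_s^*[\MC]$. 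Applying $i_v^*$ at $v \in V \in \VV_0$ and invoking the orthogonality relation then gives $i_v^*[\MC] = \sum_S \alpha_S \cdot i_v^*[IC'^H_{\bar S}] + \beta_V$, which rearranges to the expression for $\beta_V$ in (\ref{mE}). The main obstacle I anticipate is the combinatorial bookkeeping in the inductive proof of the orthogonality relation: one must carefully track the interaction between the partial order on strata, the support properties of the various $\widehat{IC^H}(\bar W)$, and the compatibility of the ring action of $K_0(MHM(pt))$ on $K_0(MHM(X))$ with the stalk functors $i_w^*$.
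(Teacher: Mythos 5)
Your proof is correct and follows essentially the same route as the one the paper relies on: the paper omits the argument, deferring to \cite{CMS2}[Thm.~3.2], whose proof is precisely this induction establishing the stalkwise orthogonality $i_u^*\widehat{IC^H}(\bar V)=[\Q^H_{pt}]$ for $u\in V$ and $0$ otherwise, followed by solving for the coefficients of $[\MC]$ in the resulting ``dual'' generating set via $i_s^*$ and $i_v^*$. The one point worth making explicit is that for $W<V$ one has $\bar W\subset \bar V\setminus V$ because $\bar V\setminus V$ is closed, which is exactly what makes the subtracted terms vanish at $v\in V$; you use this correctly.
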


Before indicating how Theorem \ref{CMS} can be employed for proving our result, let us remark that one instance when the technical hypothesis $[\MC] \in \langle [IC'^H_{\bar V}]
\rangle$ is satisfied for a fixed $\MC \in D^b{\rm MHM}(X)$, is when all strata $V \in \VV$ are simply-connected and the rational complex $rat(\MC)$ is $\VV$-constructible. For this fact, we refer to \cite{CMS2}[Ex.3.3] where more general situations are also considered. Also note that Theorem \ref{CMS} above is stated in a slightly more general form than the corresponding result of \cite{CMS2}, where only the case of an irreducible variety $X$ was needed. However, the proof is identical to that of Theorem 3.2 of \cite{CMS2}, so we omit it here.

\begin{remark}\rm  Note that if under the hypotheses of Theorem \ref{CMS}, we assume moreover that $\MC \in D^b{\rm MHM}(X)$ is in fact supported only on the collection of singular strata $\VV_0$, then equation (\ref{mE}) reduces to
\begin{equation}\label{mEs}
[\MC]= \sum_{V \in \VV_0}  \widehat{IC^H}(\bar V) \cdot  i_v^*[\MC].
\end{equation}
\end{remark}

We can now prove the second part our main theorem, which we recall here for the convenience of the reader. 
\begin{thm}\label{main2} Let $X=\{f=0\}$ be a complex algebraic variety defined as the zero-set (of codimension one)  of an algebraic function $f:M \to \C$, for $M$ a  complex algebraic manifold. Fix a Whitney stratification $\VV$ on $X$, and denote by $\VV_0$ the collection of all singular strata (i.e., strata $V \in \VV$ with ${\rm dim}(V)<{\rm dim}(X)$). For each $V \in \VV_0$, define inductively $$\widehat{IT}_y(\bar V):= {IT_y}_*(\bar V)- \sum_{W < V}
\widehat{IT}_y(\bar W) \cdot I\chi_y(c^{\circ}L_{W,V}),$$ where the summation is over all strata $W \subset {\bar V} \setminus V$ and
$c^{\circ}L_{W,V}$ denotes the open cone on the link of  $W$ in
$\bar{V}$. (As the notation suggests, the class $\widehat{IT}_y(\bar V)$ depends only on the complex algebraic variety ${\bar V}$ with its induced algebraic Whitney stratification.) Then, if all strata $V \in \VV_0$ are assumed to be simply-connected, the following holds:
\begin{equation}\label{eq40}
\MC{T_y}_*(X):={T_y}^{\rm vir}_*(X) - {T_y}_*(X)=\sum_{V \in \VV_0} \widehat{IT}_y(\bar V) \cdot \chi_y([\tilde{H}^*(F_v;\Q)]),
\end{equation} for $F_v$ the Milnor fiber of a point $v \in V$.
\end{thm}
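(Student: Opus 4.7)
The plan is to combine Theorem \ref{M} with the Grothendieck-group decomposition of Theorem \ref{CMS}, applied to the vanishing cycle class, and then push everything through the motivic Hirzebruch class transformation. By Theorem \ref{M},
$$\MC{T_y}_*(X) = {MHT_y}_*\bigl(\Phi'^H_f([\Q^H_M])\bigr),$$
so it suffices to decompose $[\Phi'^H_f(\Q^H_M)] \in K_0(\mh(X))$ in the submodule generated by the shifted intersection cohomology Hodge modules $[IC'^H_{\bar V}]$, and then apply ${MHT_y}_*$ termwise.

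First, I would verify that $[\Phi'^H_f(\Q^H_M)]$ lies in the submodule $\langle [IC'^H_{\bar V}]\rangle \subset K_0(\mh(X))$. Since $\VV$ is a Whitney stratification of $X$, the underlying perverse sheaf of $\Phi'^H_f(\Q^H_M)$ is $\VV$-constructible; combined with the simple-connectedness of all strata $V \in \VV_0$, the argument of \cite{CMS2}[Ex.3.3] applies and places the class in $\langle [IC'^H_{\bar V}]\rangle$. Moreover $\Phi_f\Q_M$ is supported on $X_{\rm sing} = \bigcup_{V \in \VV_0} V$, so no contribution from the regular stratum occurs. The support-reduced form (\ref{mEs}) of Theorem \ref{CMS} therefore gives
$$[\Phi'^H_f(\Q^H_M)] = \sum_{V \in \VV_0} \widehat{IC^H}(\bar V) \cdot i_v^*[\Phi'^H_f(\Q^H_M)] \quad \in K_0(\mh(X)).$$

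Next, by the stalk formula (\ref{stalk}), $i_v^*[\Phi'^H_f(\Q^H_M)] = [\tilde H^*(F_v;\Q)] \in K_0(\mhs)$. Applying ${MHT_y}_*$ to both sides and using that ${MHT_y}_*$ is $K_0(\mhs)$-linear for the module action given by external product with a point class (a formal consequence of the compatibility of $MHC_y$ with exterior products and the multiplicativity of the Baum--Fulton--MacPherson Todd transformation, so that over a point ${MHT_y}_*$ reduces to $\chi_y$), the scalar coefficient $i_v^*[\Phi'^H_f(\Q^H_M)]$ is converted into $\chi_y([\tilde H^*(F_v;\Q)])$. Hence
$$\MC{T_y}_*(X) = \sum_{V \in \VV_0} {MHT_y}_*\bigl(\widehat{IC^H}(\bar V)\bigr) \cdot \chi_y([\tilde H^*(F_v;\Q)]).$$

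Finally, I would identify ${MHT_y}_*(\widehat{IC^H}(\bar V)) = \widehat{IT}_y(\bar V)$ by induction on the depth of $V$ in $\bar V$. For minimal strata (where $\bar V = V$) the equality is tautological since $\widehat{IC^H}(\bar V) = [IC'^H_{\bar V}]$. For the inductive step, apply ${MHT_y}_*$ to the defining recursion (\ref{eq8}); by (\ref{cone}) the stalk coefficient $i_w^*[IC'^H_{\bar V}] = [IH^*(c^{\circ}L_{W,V})]$ is transformed under the same $K_0(\mhs)$-linearity into $I\chi_y(c^{\circ}L_{W,V})$, matching verbatim the recursive definition of $\widehat{IT}_y(\bar V)$. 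Assembling these identifications yields (\ref{eq40}). The main technical point is the $K_0(\mhs)$-linearity of ${MHT_y}_*$ with respect to external products by a point class, but this is already built into the construction of ${MHT_y}_*$ from $MHC_y$ and $td_*$, both of which respect exterior products.
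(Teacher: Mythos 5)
Your proposal follows essentially the same route as the paper's own proof: invoke Theorem \ref{M}, apply the support-reduced decomposition (\ref{mEs}) of Theorem \ref{CMS} to $[\Phi'^H_f(\Q^H_M)]$ (justified via \cite{CMS2}[Ex.3.3] and the simple-connectedness of the singular strata), identify the stalk coefficients via (\ref{eq60}), and use the compatibility of ${MHT_y}_*$ with exterior products by a point to convert the $K_0(\mhs)$-module structure into multiplication by $\chi_y$, including the inductive identification ${MHT_y}_*(\widehat{IC^H}(\bar V))=\widehat{IT}_y(\bar V)$. The argument is correct, and your explicit verification of the hypothesis $[\Phi'^H_f(\Q^H_M)]\in\langle [IC'^H_{\bar V}]\rangle$ is a detail the paper only addresses in the remark preceding the theorem.
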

\begin{proof} By using the equation (\ref{eq20}), it suffices to show that:
\begin{equation}\label{eq50} {MHT_y}_*(\Phi'^H_f(\left[\Q^H_M\right]))=\sum_{V \in \VV_0} \widehat{IT}_y(\bar V) \cdot \chi_y([\tilde{H}^*(F_v;\Q)])\end{equation}
Next note that the sheaf complex $\Phi_f(\Q_M)$ is supported only on singular strata of $X$ and,  moreover, if $v \in V \in \VV_0$ then the following identity holds in $K_0({\rm MHM}(pt))$:
\begin{equation}\label{eq60}i_v^*\Phi'^H_f(\left[\Q^H_M\right])=
[\HC^*(\Phi'^H_f(\Q^H_M))_v]=[\tilde{H}^{*}(F_v;\Q)],\end{equation} where $F_v$ is the Milnor fiber of $f$ at $v$.

By using the fact that the transformation ${MHT_y}_*$ commutes with the exterior product 
$$K_0({\rm MHM}(X)) \times K_0({\rm MHM}(pt)) \to K_0({\rm MHM}(X\times \{pt\}))\simeq K_0({\rm MHM}(X))$$
(see \cite{Sch3}[Sect.5]),
it is easy to see that for each $V \in \VV_0$ the characteristic class $\widehat{IT}_y(\bar V)$ is  just ${MHT_y}_*(\widehat{IC^H}(\bar V))$.
Then (\ref{eq50}) follows by applying ${MHT_y}_*$ to the identity (\ref{mEs}), together with the identification in (\ref{eq60}), and the fact that ${MHT_y}_*$ commutes with the exterior product.

\end{proof}

\begin{remark}\rm By using $[\MC]=[\Q^H_X]$ in the identity (\ref{mE}), and after applying the transformation ${MHT_y}_*$, we obtain the following relationship between the classes ${T_y}_*(X)$ and ${{IT}_y}_*(X)$, respectively:
\begin{equation}\label{eq70}
{T_y}_*(X) -{IT_y}_*(X)=\sum_{V \in \VV_0} \widehat{IT}_y(\bar V) \cdot (1-\chi_y([IH^*(c^{\circ}L_{V,X})])),
\end{equation} for $L_{V,X}$ the link of the stratum $V$ in $X$.  Here we use the fact that for a pure-dimensional algebraic variety $X$, $$IC'^H_X=\oplus_{S\in \pi_0(X_{\rm reg})} IC'^H_{\bar S},$$ thus by taking stalk cohomologies we get $$[IH^*(c^{\circ}L_{V,X})]=\oplus _{S\in \pi_0(X_{\rm reg})} [IH^*(c^{\circ}L_{V,S})] \in K_0(\mh(pt)).$$ The identity (\ref{eq70}) can be used to express the Milnor-Hirzebruch class ${T_y}^{\rm vir}_*(X) - {T_y}_*(X)$ as a weighted sum of classes ${T_y}_*({\bar V})$ of closures of singular strata, the weights depending entirely on the information encoded in the normal direction to the respective strata. More precisely, we obtain the first part of our main  Theorem \ref{main}:
\begin{equation}\label{eq90}
\MC{T_y}_*(X)=\sum_{V \in \VV_0}\left( {T_y}_*(\bar V) - {T_y}_*({\bar V} \setminus V) \right) \cdot \chi_y([\tilde{H}^*(F_v;\Q)]),
\end{equation} for $F_v$ the Milnor fiber of a point $v \in V$. Finally, we obtain the first part of Theorem \ref{main} in the stronger form indicated in Remark \ref{new} by using ``rigidity" and multiplicativity for exterior products with points. More precisely, it follows by ``rigidity" (e.g., see \cite{CMS2}[p.435]) that a ``good"  variation of mixed Hodge structures on a connected complex
algebraic manifold $V$ is a constant variation provided  the underlying
local system is already constant. Applying this fact to a ``good" variation $\LL_V$ with constant underlying local system on a connected stratum $V \in \VV_0$, we get that $\LL_V\simeq k^*\LL_v$, where $v\in V$ is a point in the stratum, and $k: V\to v$ is the constant map. Therefore, if $j: V\to \bar{V}$ denotes the open inclusion into the closure of the stratum, we have that
$$j_!\LL_V \simeq j_!k^*\LL_v \simeq j_!\Q^H_V \boxtimes \LL_v \:.$$
Then the claim of Remark \ref{new} follows from the multiplicativity of
${MHT_y}_*(-)$ with respect to exterior products (with points).

\end{remark}

\subsection{Intersection Milnor-Hirzebruch classes} By analogy with the Milnor-Hirzebruch class, we can define {\it intersection Milnor-Hirzebruch classes} for a (pure-dimensional) complex hypersurface as the difference
\begin{equation}\MC{IT_y}_*(X):={T_y}^{\rm vir}_*(X) - {IT_y}_*(X).\end{equation} 
In fact, this is more natural to consider if one wants to compare the specialization at $y=1$ of ${\MC{IT}_y}_*(X)$
with the difference term $L^{\rm vir}_*(X)-L_*(X)$ of the corresponding $L$-classes, since
$L_*(X)$ is defined with the help of the shifted (self-dual) intersection cohomology complex 
$IC'_X:=IC_X[-{\rm dim}(X)]$ of $X$.

A direct interpretation for this class can be given by noting that (compare \cite{Sa}[p.152-153]) $IC^H_X$ is a {\em direct summand} of
$Gr^W_{n}\Psi^H_{f}(\Q_M^H[n+1])\in \hp(X)$,
where $W$ is the weight filtration on $\Psi^H_{f}$. 
In fact, $\Q_M^H[n+1]\in \hp(M)$ is a pure Hodge module of weight $n+1$ (with strict support 
$M$), so that by the inductive definition of pure Hodge modules (\cite{Sa0, Sa1}) $$Gr^W_{n}\Psi^H_{f}(\Q_M^H[n+1])\in \hp(X)$$
is a pure Hodge module of weight $n$. So it is a finite direct sum of pure Hodge modules
of weight $n$ with strict support in irreducible subvarieties of $X$.
But $$\Psi^H_{f}(\Q_M^H[n+1])\vert_{X_{\rm reg}}\simeq \Q_{X_{\rm reg}}^H[n]\:,$$
therefore  $IC^H_X$ has to be the direct summand of $Gr^W_{n}\Psi^H_{f}(\Q_M^H[n+1])$
coming from the pure direct summands with strict support the irreducible components of $X$.
Then 
\begin{equation}\label{last} \begin{split}
(-1)^n \cdot {MHT_y}_*&\left([Gr^W_{n}\Psi^H_{f}(\Q_M^H[n+1])\ominus IC^H_X]+
 \sum_{k \neq n} [Gr^W_{k}\Psi^H_{f}(\Q_M^H[n+1])] \right)\\
 &=\MC{IT_y}_*(X) \in H_*(X)\otimes \Q[y]\subset H_*(X)\otimes \Q[y,y^{-1}] \: . \end{split}\end{equation} 
Here the last inclusion follows by $T_*(X)\in  H_*(X)\otimes \Q[y]$ from the commutative diagram (\ref{special}).

Formula (\ref{last}) holds independently of any monodromy assumptions. Moreover, the right-hand side of (\ref{last}) is an invariant of the  singularities of $X$, since the restrictions of $\Psi'^H_f([\Q^H_M])$ and $[IC'^H_X]$ over the regular part $X_{\rm reg}$ of $X$ coincide, so that
 $Gr^W_{n}\Psi^H_{f}(\Q_M^H[n+1])\ominus IC^H_X$ and $Gr^W_{k}\Psi^H_{f}(\Q_M^H[n+1])$ for $k\neq n$ are supported on $X_{\rm sing}$. 
 Therefore we get from (\ref{last}) as before (by the functoriality of ${MHT_{y}}_*$ for the 
 closed inclusion $X_{\rm sing}\hookrightarrow X$) a localized version
 \begin{equation}\label{last-loc} \begin{split}
(-1)^n \cdot {MHT_y}_*&\left([Gr^W_{n}\Psi^H_{f}(\Q_M^H[n+1])\ominus IC^H_X]+
 \sum_{k \neq n} [Gr^W_{k}\Psi^H_{f}(\Q_M^H[n+1])] \right)\\
 &=:\MC{IT_y}_*(X) \in H_*(X_{\rm sing})\otimes \Q[y] \: . \end{split}\end{equation} 
 
In particular, the classes ${T_y}^{\rm vir}_*(X)$ and ${IT_y}_*(X)$ coincide in degrees higher than the dimension of the singular locus. However, in general it is difficult to explicitly understand (\ref{last-loc}), except for simple situations. For example, if $X$ has only isolated singularities, the stalk calculation yields just as in Example \ref{ex:iso} that:
\begin{equation} \MC{IT_y}_*(X)=\sum_{x \in X_{\rm sing}} \left( \chi_y([H^*(F_x;\Q)]) - \chi_y([IH^*(c^{\circ}L_{x,X})] \right).
\end{equation}
And all special situations described earlier by examples have a counterpart in this case. We leave the details and precise formulations as an exercise for the interested reader.\\ 

More generally, under the hypotheses of Theorem \ref{main} and if $X$ is also reduced, (\ref{eq1}) and (\ref{eq70}) yield the following class formulae (which should be compared to the $L$-class formula (\ref{CS}) from the Introduction):
{\allowdisplaybreaks
\begin{eqnarray*} 
\MC{IT_y}_*(X)&:=&{T_y}^{\rm vir}_*(X) - {IT_y}_*(X)\\
&=&\sum_{V\in {\VV_0}} ({T_y}_*(\bar V) - {T_y}_*(\bar{V}\backslash V)) \cdot
(\chi_y([H^*(F_v;\Q)]) - \chi_y([IH^*(c^¡L_{V,X})]) \\
&=&\sum_{V \in \VV_0} \widehat{IT}_y(\bar V) \cdot \left( \chi_y([H^*(F_v;\Q)]) - \chi_y([IH^*(c^{\circ}L_{V,X})] \right)\:.
\end{eqnarray*}
}

\section{Geometric Consequences and Concluding Remarks}\label{fin}

As already pointed out in the Introduction, for the value $y=-1$ of the parameter, the Milnor-Hirzebruch class $\MC{T_y}_*(X)$ reduces to the rationalized Milnor class of $X$, which measures the difference between the Fulton-Johnson class \cite{FJ} and Chern-MacPherson class \cite{MP}.\\

Let us now consider the case when $y=0$. If the hypersurface $X$ has only {\it Du Bois singularities} (e.g., rational singularities, cf. \cite{Sa2}), then by (\ref{0}) we have that $\MC{T_0}_*(X)=0$, i.e., 
$${MHT_0}_*(\Phi'^H_f(\left[\Q^H_M\right]))=0 \in H_*(X)\otimes \Q \:.$$ 
In view of our main result, this vanishing (which is in fact a class version of Steenbrink's {\it cohomological insignificance of $X$} \cite{St}) imposes interesting geometric identities on the corresponding Todd-type invariants of the singular locus. For example, we obtain the following
\begin{cor}  If the hypersurface $X$ has only {\it isolated Du Bois singularities}, then
\begin{equation}\label{van} {\rm dim}_{\C} Gr^0_FH^n(F_x;\C)=0\end{equation}
for all  $x\in X_{\rm sing}$. 
\end{cor}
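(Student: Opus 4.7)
The plan is to combine the Du Bois vanishing of $\MC T_{0*}(X)$ with the explicit isolated-singularity formula from Example \ref{ex:iso}, then unwind the meaning of the $\chi_y$-genus at $y=0$.

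First, I would apply Theorem \ref{M} together with formula (\ref{iso}) to obtain the localized identity
$$\MC T_{y*}(X)\;=\;\sum_{x\in X_{\rm sing}}(-1)^n\,\chi_y\!\bigl([\tilde H^n(F_x;\Q)]\bigr)\cdot[x]\;\in\;H_0(X_{\rm sing})\otimes\Q[y],$$
where $[x]$ is the generator of the $\Q$-summand corresponding to the point $x$. By the definition of the $\chi_y$-genus recalled in the Introduction,
$$\chi_0([H])\;=\;\sum_p\dim_{\C}Gr^p_F(H\otimes\C)\cdot(-0)^p\;=\;\dim_{\C}Gr^0_F(H\otimes\C),$$
so specializing the formula at $y=0$ isolates precisely the Hodge number $\dim_{\C}Gr^0_F \tilde H^n(F_x;\C)$. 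Since $F_x$ is $(n-1)$-connected for an isolated hypersurface singularity and $n\geq 1$, one has $\tilde H^n(F_x;\C)=H^n(F_x;\C)$.

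Next I would use the Du Bois hypothesis to show that the left-hand side vanishes at $y=0$. By (\ref{0}), $T_{0*}(X)=td_*([\OO_X])$. On the other hand, because the cohomology Hirzebruch class specializes at $y=0$ to the Todd class $td^*$, we have $T_{0*}^{\rm vir}(X)=td^*(T_{\rm vir}X)\cap[X]=td^{\rm vir}_*(X)$, and the Verdier--Riemann--Roch property for the Baum--Fulton--MacPherson transformation (as recalled in the Introduction) gives $td^{\rm vir}_*(X)=td_*([\OO_X])$. Thus
$$\MC T_{0*}(X)\;=\;T_{0*}^{\rm vir}(X)-T_{0*}(X)\;=\;0\;\in\;H_*(X)\otimes\Q.$$

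Combining the two steps, the sum
$$\sum_{x\in X_{\rm sing}}(-1)^n\,\dim_{\C}Gr^0_F H^n(F_x;\C)\cdot[x]$$
vanishes in $H_0(X_{\rm sing})\otimes\Q\cong\bigoplus_{x\in X_{\rm sing}}\Q$. Since the classes $[x]$ for distinct singular points are linearly independent, every coefficient must be zero, yielding $\dim_{\C}Gr^0_F H^n(F_x;\C)=0$ for every $x\in X_{\rm sing}$. There is no real obstacle here: the argument is entirely a matter of assembling the specialization at $y=0$ on both sides of the main identity of Theorem \ref{M}; the only content is the observation that ``$y=0$ in the $\chi_y$-genus'' extracts exactly the invariant $\dim Gr^0_F$ that the Du Bois hypothesis forces to cancel.
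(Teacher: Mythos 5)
Your proposal is correct and follows essentially the same route as the paper: the authors likewise deduce $\MC{T_0}_*(X)=0$ from (\ref{0}) together with the Verdier--Riemann--Roch identity $td_*^{\rm vir}(X)=td_*([\OO_X])$, and then read off the vanishing of each coefficient $\dim_{\C}Gr^0_F H^n(F_x;\C)=\chi_0([\tilde{H}^n(F_x;\Q)])$ from the isolated-singularity formula (\ref{iso}). The only detail worth tightening is the remark about $(n-1)$-connectedness, which is not needed: $\tilde{H}^n=H^n$ in any positive degree $n$.
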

\noindent It should be pointed out that in this setting, by a result of Ishii \cite{I} one gets that (\ref{van}) is in fact equivalent to $x\in X_{\rm sing}$ being an isolated Du Bois hypersurface singularity.
Also note that in the arbitrary singularity case, the {\it Milnor-Todd class } $\MC{T_0}_*(X)$ carries interesting non-trivial information about the singularities of the hypersurface $X$.\\

Finally, if $y=1$, our main formula (\ref{eq1}) should be compared to the Cappell-Shaneson topological result of equation (\ref{CS}). While it can be shown (compare with \cite{Max05}) that the normal contribution $\sigma({\rm lk}(V))$ in (\ref{CS}) for a singular stratum $V \in \VV_0$ is in fact the signature $\sigma(F_v)$ ($v \in V$) of the Milnor fiber (as a manifold with boundary) of the singularity in a transversal slice to $V$, the precise relation
between $\sigma(F_v)$ and $\chi_1([\tilde{H}^*(F_v;\Q)])$ is in general very difficult to understand.
However,  in some cases it is possible to obtain such a ``local Hodge index theorem" (compare with equation (\ref{HIH}) for the global projective case):
\begin{prop}\label{lHi}
Assume the complex hypersurface $X=f^{-1}(0)$  is a  {\it rational homology manifold with only isolated singularities}. Then for any $x \in X_{\rm sing}$, we have:
\begin{equation}\label{fin} \sigma(F_x)=\chi_1([\tilde{H}^n(F_x;\Q)]).
\end{equation}
\end{prop}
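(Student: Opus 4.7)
The plan is to deduce Proposition~\ref{lHi} from a scalar degree identity obtained by comparing the Cappell--Shaneson formula~(\ref{CS}) with the isolated-singularity case~(\ref{iso}) of Theorem~\ref{M} on a suitably chosen compact projective realization of the germ at $x$.

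First I would globalize. Both $\sigma(F_x)$ and $\chi_1([\tilde{H}^n(F_x;\Q)])$ depend only on the analytic germ of $f$ at $x$, so one may replace $X$ by any compact projective hypersurface $\bar X\subset \bar M$ that agrees with $X$ on an open chart around $x$ and whose only singular point is $x$. Such a $\bar X$ exists by a standard argument: realize the germ of $f$ as an affine piece of a sufficiently high-degree polynomial on a projective compactification $\bar M$, and then apply a Bertini-type perturbation of the coefficients away from $x$ to clear all other critical points. The resulting $\bar X$ is smooth away from $x$ and a rational homology manifold at $x$, hence globally a compact rational homology manifold with a single isolated singularity.

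For this $\bar X$, the Cappell--Shaneson formula~(\ref{CS}) collapses to $L^{\rm vir}_*(\bar X)-L_*(\bar X)=\sigma(F_x)\cdot[\{x\}]$, while~(\ref{iso}) at $y=1$ gives ${T_1}^{\rm vir}_*(\bar X)-{T_1}_*(\bar X)=(-1)^n\,\chi_1([\tilde{H}^n(F_x;\Q)])\cdot[\{x\}]$. Pushing forward to a point turns these into scalar identities. By Hirzebruch's coincidence $T_1^*(T\bar X)=L^*(T\bar X)$ in cohomology, the two virtual pieces agree: $\int{T_1}^{\rm vir}_*(\bar X)=\int L^{\rm vir}_*(\bar X)$, both equal to the signature of a smoothing by~(\ref{pd}). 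Because $\bar X$ is a rational homology manifold, $IC'^H_{\bar X}\simeq\Q^H_{\bar X}$, so ${T_y}_*(\bar X)={IT_y}_*(\bar X)$, and Saito's Hodge index theorem~(\ref{HIH}) yields $\int{T_1}_*(\bar X)=I\chi_1(\bar X)=\sigma(\bar X)=\int L_*(\bar X)$, the last equality being the standard expression of the Goresky--MacPherson signature of a compact rational homology manifold. Subtracting the two identities produces
\[
\sigma(F_x)=(-1)^n\cdot\chi_1([\tilde{H}^n(F_x;\Q)]).
\]

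It remains to dispose of the sign $(-1)^n$. For $n$ even the sign is trivial. For $n$ odd, the real dimension of $F_x$ is $2n\equiv 2\pmod{4}$, so $\sigma(F_x)=0$ by convention; moreover, the rational-homology-manifold hypothesis at $x$ forces the link of $x$ in $\bar X$ to be a rational homology sphere, hence the monodromy on $\tilde{H}^n(F_x;\Q)$ has no eigenvalue~$1$, and the Steenbrink mixed Hodge structure on $\tilde{H}^n(F_x;\Q)$ is pure of weight $n$. Hodge symmetry $h^{p,q}=h^{q,p}$ with $p+q=n$ odd then forces $\chi_1=\sum_p(-1)^p h^{p,n-p}=0$, so both sides vanish and the identity stands as asserted. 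The most delicate step in this plan is the global realization of an arbitrary isolated germ as the sole singularity of a compact projective rational homology hypersurface; once that is in hand, the remaining steps are a bookkeeping of scalar invariants furnished by~(\ref{CS}),~(\ref{iso}), the Hirzebruch identification $T_1^*=L^*$, and Saito's Hodge index theorem for intersection cohomology.
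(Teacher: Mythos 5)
Your strategy is genuinely different from the paper's: the paper argues purely locally (Steenbrink's formula for $\sigma(F_x)$ in terms of the Hodge numbers $h^{p,q}$ of $H^n(F_x;\C)$ for $n$ even, and a duality argument on mixed Hodge modules for $n$ odd), whereas you globalize and extract the identity from the degree-zero parts of the class formulae (\ref{CS}) and (\ref{iso}) via $T_1^*=L^*$ and Saito's Hodge index theorem (\ref{HIH}). For $n$ even this reverse-engineering is coherent (it is essentially the proof of the theorem following Prop.~\ref{lHi}, run backwards), though it trades a short local computation for heavy global inputs: the Cappell--Shaneson theorem, the identification $\sigma(\mathrm{lk}(x))=\sigma(F_x)$, Saito's index theorem (which requires your compactification to be projective, not merely compact), and the realization of an arbitrary isolated germ as the unique singular point of a projective hypersurface. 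That realization is standard but not free, and you correctly flag it as the delicate step; it needs to be carried out, not just asserted.

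The genuine gap is in your treatment of $n$ odd. You claim that because $1$ is not a monodromy eigenvalue, ``the Steenbrink mixed Hodge structure on $\tilde{H}^n(F_x;\Q)$ is pure of weight $n$.'' This is false: on the eigenvalue-$\neq 1$ part the weight filtration is the monodromy weight filtration of $N=\log T_u$ centered at $n$, and it is nontrivial whenever $N\neq 0$. For example, an irreducible plane curve germ with two Puiseux pairs (say of type $(y^2-x^3)^2+x^5y$) has a knot, hence rational homology sphere, as link, but its monodromy is not of finite order, so the MHS on $H^1(F_x)$ is not pure of weight $1$. Without purity, the conjugation symmetry $h^{p,q}=h^{q,p}$ only kills the contributions with $p+q$ odd and leaves $\chi_1=\sum_{p+q\ \mathrm{even}}(-1)^ph^{p,q}$, which your argument does not control. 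What actually forces the vanishing is a duality statement on the Hodge filtration alone, namely $\dim Gr^p_F\tilde{H}^n(F_x;\C)=\dim Gr^{n-p}_F\tilde{H}^n(F_x;\C)$, giving $\chi_1=(-1)^n\chi_1=0$; this is exactly what the paper establishes via the duality involution $\DD$ on $K_0(\mbox{MHM}(-))$, using $\DD\Q^H_X\simeq\Q^H_X[2n](n)$ (the rational homology manifold hypothesis) and the compatibility of $\DD$ with $\Phi^H_f$ and with restriction to the isolated point. You should replace the purity claim by this duality argument; note also that the $(-1)^n$ you need to ``dispose of'' is precisely the factor this duality produces, so the two issues are the same issue.
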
 
\begin{proof} If $X$ is of even
complex dimension $n$, the result follows form the following formula of Steenbrink  (see \cite{St5}[Thm.11]):
\begin{equation}\sigma(F_x)=\sum_{p+q=n} (-1)^p \left( h^{p,q}+ 2 \sum_{i\geq 1} (-1)^i h^{p+i,q+i}\right) \:,\end{equation}
with $h^{p,q}:={\rm dim} Gr^p_F Gr^W_{p+q} H^n(F_x;\C)$ the corresponding Hodge numbers of the mixed Hodge structure on $H^n(F_x;\Q)$.
Indeed, since $X$ is a rational homology manifold, we get by  \cite{St5}[p.293] that:
$$0={\rm dim} A^{p+i,q+i}_{n+2i}=h^{p+i,q+i} - h^{p-i,q-i} \:.$$
Moreover, the symmetry $h^{p,q}=h^{q,p}$ of the Hodge numbers under 
conjugation yields:
$$\sum_{p+q= odd} (-1)^p h^{p,q} = 0$$
Altogether, we get
\begin{equation}\sigma(F_x) = \sum_{p,q} (-1)^p h^{p,q} = \chi_1([\tilde{H}^n(F_x;\Q)]) \:.\end{equation}

In case $X$ is of odd complex dimension $n$, both terms of the claimed equality (\ref{fin}) vanish identically. Indeed, $\sigma(F_x)=0$ by definition, whereas  
the vanishing of $\chi_1([\tilde{H}^n(F_x;\Q)])$
follows from a duality argument, as in the proof of the classical Hodge
index theorem (\ref{HIH}). More precisely, one has a duality involution $\DD$ acting on
$K_0(\mh(-))$ and, resp.,  $K_0(var/-)[\Lef^{-1}]$ in a compatible way (e.g., see 
\cite{Sch3}[(47),(48)]), with $\DD$ the usual duality involution on
$K_0(\mh(pt))=K_0(\mhs^p)$. In particular,
$$\chi_y(\DD(-))= \chi_{1/y}(-) \quad \text{and} \quad
\chi_1(\DD(-))= \chi_1(-)$$
on $K_0(\mhs^p)$. Moreover,
$$\DD\Psi_f^m([id_M])= \Lef^{-n}\cdot \Psi_f^m([id_M])$$
(cf. \cite{Bit}[Thm.6.1]), and
$$\DD\circ \Psi^H_f (1) \simeq \Psi^H_f\circ \DD$$
on $D^b\mh(M)$ (cf. \cite{Sa1}[Prop.2.6]). Similarly,
$$\DD \Q^H_X \simeq \Q^H_X[2n](n) \:,$$
as $X$ is a rational homology manifold, so $\Q^H_X\simeq IC'^H_X$.
Putting this together, we get 
$$\DD [ \Phi^H_f( \Q^H_M ) ] = [\Phi^H_f( \Q^H_M ) (n)] \in K_0(\mh(X))\:.$$
Lastly, the isolated singularity $x\in X_{\rm sing}$ is an isolated point in the support of
$ \Phi^H_f( \Q^H_M )$ and $\DD  \Phi^H_f( \Q^H_M )$, respectively, thus
$$i_x^*\DD  \Phi^H_f( \Q^H_M ) \simeq i_x^! \DD  \Phi^H_f( \Q^H_M )
\simeq \DD i_x^*  \Phi^H_f( \Q^H_M ),$$
with $i_x: \{x\}\to X$  the inclusion map. We now get the desired vanishing 
$\chi_1([\tilde{H}^n(F_x;\Q)])=0$ from the following sequence of identities:
$$\chi_1(i_x^* [\Phi^H_f( \Q^H_M )]) = \chi_1(\DD i_x^* [\Phi^H_f( \Q^H_M )])
= \chi_1( i_x^*\DD [\Phi^H_f( \Q^H_M )])
= (-1)^n \chi_1( i_x^*[\Phi^H_f (\Q^H_M )]) \:.$$
\end{proof}

We can therefore prove in the setting of Prop.\ref{lHi} the following conjectural interpretation of $L$-classes from \cite{BSY}:
\begin{thm} Let $X$ be a compact complex algebraic variety with only isolated singularities,
which moreover is a  rational homology manifold and can be realized as a global hypersurface (of codimension one) in a complex algebraic manifold. Then \begin{equation}L_*(X)={IT_{y}}_*(X)\vert_{y=1}.\end{equation}
\end{thm}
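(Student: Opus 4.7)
The plan is to combine the classical Cappell–Shaneson formula \eqref{CS} for $L_*^{\rm vir}(X)-L_*(X)$ with the isolated-singularity Milnor–Hirzebruch formula \eqref{iso} at $y=1$, and then match the two local contributions using the ``local Hodge index'' statement of Proposition \ref{lHi}. First I would observe that, because $X$ is a rational homology manifold, $IC'^H_X\simeq \Q^H_X$, so
$${IT_y}_*(X)={T_y}_*(X);$$
in particular ${IT_y}_*(X)|_{y=1}={T_1}_*(X)$. Also, by the normalization property of the cohomology Hirzebruch class (the specialization $T_1^*=L^*$), the two virtual classes coincide:
$$L_*^{\rm vir}(X)=L^*(T_{\rm vir}X)\cap[X]=T_1^*(T_{\rm vir}X)\cap[X]={T_1}^{\rm vir}_*(X).$$

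Next, the isolated singularities hypothesis lets me invoke on one side the Cappell–Shaneson formula in the form explicitly recalled in the paper: the only singular strata are the points $x\in X_{\rm sing}$, which are trivially simply-connected, and the normal signature $\sigma({\rm lk}(V))$ for $V=\{x\}$ is the Milnor fiber signature $\sigma(F_x)$, so
$$L_*^{\rm vir}(X)-L_*(X)=\sum_{x\in X_{\rm sing}}\sigma(F_x)\cdot[x].$$
On the other side, Example \ref{ex:iso} (which is a direct consequence of Theorem \ref{M}) gives
$${T_1}^{\rm vir}_*(X)-{T_1}_*(X)=\sum_{x\in X_{\rm sing}}(-1)^n\,\chi_1([\tilde H^n(F_x;\Q)])\cdot[x],$$
using that $F_x$ is $(n-1)$-connected so its reduced rational cohomology is concentrated in degree $n$.

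It then suffices to identify the two local contributions. If $n$ is even, Proposition \ref{lHi} gives directly $\sigma(F_x)=\chi_1([\tilde H^n(F_x;\Q)])$, and the factor $(-1)^n=1$ matches the two sums termwise. If $n$ is odd, $\sigma(F_x)=0$ by definition (odd real dimension of the middle cohomology), and the duality argument used in the proof of Proposition \ref{lHi} (via $\DD\Phi^H_f\Q^H_M\simeq \Phi^H_f\Q^H_M(n)$ and $\chi_1\circ\DD=\chi_1$) forces $\chi_1([\tilde H^n(F_x;\Q)])=0$ as well, so both local contributions vanish. In either parity case we obtain
$$L_*^{\rm vir}(X)-L_*(X)={T_1}^{\rm vir}_*(X)-{T_1}_*(X);$$
combining this with the two identifications $L_*^{\rm vir}(X)={T_1}^{\rm vir}_*(X)$ and ${T_1}_*(X)={IT_y}_*(X)|_{y=1}$ from the first paragraph yields the desired equality $L_*(X)={IT_y}_*(X)|_{y=1}$.

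The main obstacle in this plan is really parceled into Proposition \ref{lHi}, which has already been established; granting that local Hodge index identity, the rest of the argument is bookkeeping comparing two formulae that the theory in the paper makes parallel. A minor point to verify carefully is the sign/parity matching above, which I would confirm by double-checking the sign conventions in \eqref{iso} and in the definition of $\sigma(F_x)$ as used in \eqref{CS}; this is the place where a small slip could cause an off-by-$(-1)^n$ error if not treated explicitly.
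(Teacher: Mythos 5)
Your proposal is correct and follows essentially the same route as the paper: both combine the Cappell--Shaneson formula (\ref{CS}), the isolated-singularity formula (\ref{iso}), and the local Hodge index identity of Proposition \ref{lHi}, together with $L^*=T_1^*$ for the virtual classes and the vanishing of both local contributions when $n$ is odd. Your write-up merely makes explicit two points the paper leaves implicit, namely the identification ${IT_y}_*(X)={T_y}_*(X)$ for rational homology manifolds and the sign $(-1)^n=1$ in the even-dimensional case.
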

\begin{proof} Assume the complex dimension $n$ of $X$ is even. Then, by combining (\ref{fin}),  (\ref{CS}) and (\ref{iso}) we get that
\begin{equation}\label{cp}L^{\rm vir}_*(X)-L_*(X)= \sum_{x\in X_{\rm sing}} \chi_1([\tilde{H}^n(F_x;\Q)]) \cdot [x] = {T_{1}}^{\rm vir}_*(X)-{IT_{1}}_*(X)\:.\end{equation}
For $n$ odd,  formula (\ref{cp}) is trivially true, as follows by the vanishing of the local signature and, resp., Hodge contributions at each of the singular points (cf. Prop.\ref{lHi}).

Next note that since $L^*(-)=T_y^*(-)\vert_{y=1}$ (cf. \cite{H}), we obtain an equality of the corresponding virtual classes, i.e., 
\begin{equation}\label{virt}L^{\rm vir}_*(X)={T_{1}}^{\rm vir}_*(X).\end{equation}
The result follows now from the identities (\ref{cp}) and (\ref{virt}).

\end{proof}

\begin{remark}\rm  
The conjectured equality $L_*(X)={T_1}_{*}(X)$ also holds in the case of a compact 
hypersurface $X$,
which is a rational homology manifold with $X_{\rm sing}$ smooth, so that
$X_{\rm sing}\subset X$ is a Whitney stratification with all components of
$X_{\rm sing}$ simply-connected. Indeed, this follows from  the arguments used in the above proof, applied to the Milnor fiber of a transversal slice to the singular locus, combined with the identities  (\ref{CS}) and (\ref{smm}).
\end{remark}

\providecommand{\bysame}{\leavevmode\hbox
to3em{\hrulefill}\thinspace}

\end{document}